\newtheorem{theo}{Theorem}[section]
\newtheorem{prop}[theo]{Proposition}
\newtheorem{lemm}[theo]{Lemma}
\newcommand{\al}{\alpha}
\newcommand{\Ga}{\Gamma}
\newcommand{\om}{\omega}
\newcommand{\Om}{\Omega}
\newcommand{\ka}{\kappa}
\newcommand{\si}{\sigma}
\newcommand{\De}{\Delta}
\newcommand{\de}{\delta}
\newcommand{\pa}{\partial}
\newcommand{\R}{{\bf R}^n}
\newcommand{\ri}{\rightarrow}
\newcommand{\Rn}{{\bf R}^{n-1}}
\newcommand{\na}{\nabla}
\begin{document}
\baselineskip=18pt

\title[Maximum modulus for Stokes equations]{Quasi-maximum modulus principle for the Stokes equations}
\author{TongKeun Chang and Hi Jun Choe}
\address{TongKeun chang: Department of
Mathematics, Yonsei University, 50 Yonsei-ro, Seodaemun-gu, Seoul,
South Korea 120-749 } \email{chang7357@yonsei.ac.kr }
\address{Hi Jun Choe: Department of
Mathematics, Yonsei University, 50 Yonsei-ro, Seodaemun-gu, Seoul,
South Korea 120-749 } \email{choe@yonsei.ac.kr }

\thanks{}

\begin{abstract}
In this paper, we extend the maximum modulus estimate  of the
solutions of the nonstationary Stokes equations in the bounded $C^2$
cylinders for the space variables  in \cite{CC} to time estimate. We
show that if the boundary data is $L^\infty$ and the normal part of
the boundary data has log-Dini continuity with respect to time, then
the velocity is bounded. We emphasize that there is no continuity
assumption on space variables in the new
maximum modulus estimate. This completes the maximum modulus estimate.\\

\noindent
2000  {\em Mathematics Subject Classification.}  primary 35K20,
secondary 35B50. \\

\noindent {\it Keywords and phrases:nonstationary Stokes equation,
maximum modulus principle, normal velocity, log-Dini continuous. }

\end{abstract}

\maketitle

\section{Introduction}
\setcounter{equation}{0}

In this paper, we study the maximum modulus principle of the
nonstationary Stokes equations:
\begin{align}\label{maineq}
\begin{array}{ll}\vspace{2mm}
u_t -  \De u + \na p =0 &  \mbox{ in } \Omega \times (0,T),\\
\vspace{2mm} div \, u =0 &  \mbox{ in } \Omega\times (0,T),\\
\vspace{2mm}
u|_{t=0} = 0 &  \mbox{ in } \Omega, \\
u|_{\pa \Om \times (0,T)}= g   & \mbox{ on } \partial\Omega \times
(0,T),
\end{array}
\end{align}
where $\Om $ is $C^2$ bounded  domain in $\R ( n\geq 3)$ such that the boundary $\pa \Om$ of $\Om$ is   connected and
$0<T \leq \infty$. We
assume the boundary data $g$ satisfies the compatibility condition:
$$
\int_{\partial\Om} g(Q,t)\cdot N(Q) dQ = 0
$$
for almost all $t \in (0,T)$, where $N(Q)$ is the outward unit
normal vector at $Q\in \pa \Om$.

The maximum modulus principle for the stationary Stokes
equations were studied by many mathematicians  (see \cite{Ch},
\cite{KR}, \cite{M}, \cite{MRu}, \cite{Sh2}, \cite{V}, etc).

But, the maximum
modulus estimate of the velocity of the nonstationary Stokes
equations, when the boundary data is bounded, are not known.
A. V. Solonnikov\cite{So2} showed that if $\Om$ is $C^{2 + \al}, 0 < \al$ smooth convex bounded domain and $g \in C(\pa \Om \times (0,T))$ with
$g\cdot N =0$, the solution $u$ of \eqref{maineq} is continuous in $\Om \times (0,T)$ such that
\begin{align*}
\sup_{(x,t) \in \Om \times (0,T)} |u(x,t)| \leq c \sup_{(Q,t) \in \pa \Om \times (0,T)} | g(Q,t)|
\end{align*}
for some positive constant $c$ independent of $g$.
In recent, T.
Chang and H. Choe\cite{CC} improve the A. V. Solonnikov's result.
For the self
contained presentation and rigorous expressions, we repeat the same
symbols and definitions of \cite{CC}.
 We denote $E$ for the fundamental solution to Laplace equation and $\Ga$
for the fundamental solution to heat equation with unit
conductivity.  We define the $(n-1)$-dimensional convolution
$$
{\bf S} (f)(x) = \int_{\partial \Omega} E(x-Q) f(Q) dQ, \qquad x \in \Om
$$
for a real-value function $f:\partial\Omega \ri {\bf R}$ which is
just the single layer potential of $f$ on $\partial\Omega$. We
introduce a composite kernel which is the core of Poisson kernel. We
define a composite kernel function $\kappa(x,t)$ on $\Om\times
(0,\infty)$ by
\begin{align*}
\kappa (x,t) = & \int_{\pa \Om}    \frac{\partial}{\partial
N(Q)}\Ga(x-Q,t)    E (Q)dQ
\end{align*}
and a surface  potential ${\bf  T}$ for $f$ by
$$
{\bf T}(f)(x,t) =4 \int_{0}^{t }\int_{ \partial\Om} \kappa(x-Q,t-s)
f(Q,s) dQ ds, \qquad (x,t) \in \Om \times (0,T)
$$
for real-value function $f:\partial\Omega\times (0,\infty) \ri {\bf
R}$.

For given $x\in \Om$, $P_x$ is the nearest point of $x$ on
$\partial\Om$ such that $dist(x,\partial\Om)=|P_x-x|$ and for a
vector valued function $v(x)$, we define the normal component $v_N$
and tangential component $v_T$ to the nearest point $P_x$ by
$$
v_N(x) = (v(x)\cdot N(P_x)) N(P_x) \quad \mbox{and}\quad v_T(x)=
v(x) - v_N(x).
$$
In\cite{CC}, the essential estimate is stated as the following
proposition:
\begin{prop}\label{theorem1}
Suppose that $\Omega$ is a  bounded $C^2$ domain and $u$ is the
solution to (\ref{maineq}) for the bounded boundary data $g$. The
normal component $u_N$ of $u$ is bounded and  there is also a
constant $C(\Om)$ such that
$$
\max_{(x,t)\in \Omega\times (0,T)} |u_N (x,t)|
 \leq C(\Omega) \| g\|_{L^\infty (\pa \Om \times (0,T))}.
$$
Furthermore, the tangential component $u_T$ of the velocity $u$
satisfies that
$$
\max_{(x,t)\in \Omega\times (0,T)} |u_T(x,t)-\nabla {\bf S}(g \cdot
N)_T (x,t) -\nabla {\bf T}(g \cdot N)_T (x,t) |
 \leq C(\Omega)   \| g\|_{L^\infty (\pa \Om \times (0,T))}.
$$

\end{prop}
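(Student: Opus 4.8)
The plan is to reduce the evolution (time) estimate for the Stokes velocity to the spatial maximum modulus estimate already recorded in Proposition \ref{theorem1}, by exploiting the representation of $u$ through the single layer potential ${\bf S}$ and the surface potential ${\bf T}$ and then commuting the $t$-derivative through these operators. First I would write the solution of \eqref{maineq} by the classical hydrodynamic potential ansatz: $u = \mathcal V\phi$ for an unknown density $\phi$ on $\pa\Om\times(0,T)$, together with the associated pressure, so that the boundary identity becomes a Volterra integral equation $(\tfrac12 I + \mathcal K)\phi = g$ on $\pa\Om\times(0,T)$. Because the operator is of Volterra type in time with a weakly singular spatial kernel on a $C^2$ surface, it is invertible on $L^\infty(\pa\Om\times(0,T))$ with $\|\phi\|_\infty \le C(\Om)\|g\|_\infty$; this is where the connectedness of $\pa\Om$ and the compatibility condition on $g$ enter, exactly as in the stationary theory.

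Next I would isolate the normal and tangential behaviour. For $u_N$ the claim is that $\mathcal V\phi$ has bounded normal component near $\pa\Om$ with the stated bound; the mechanism is the cancellation between the double layer heat potential and the pressure-induced single layer term, which produces the composite kernel $\ka(x,t)$ appearing in the definition of ${\bf T}$. Concretely, one shows $u_N(x,t) = (\text{layer potential of }g)\cdot N(P_x)\,N(P_x) + (\text{error})$ where the error is estimated by a straightforward size estimate on $\na\Ga$ integrated against $E$ over the $C^2$ surface; the key point is that the normal derivative $\pa_{N(Q)}\Ga(x-Q,t)$ gains one power of $|x-Q|$ near $Q=P_x$, so the worst spatial singularity is integrable uniformly in $t$ and the time convolution against $g\in L^\infty$ is then bounded by $\|g\|_\infty$ times $\int_0^t (\text{kernel}) \le C$. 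For the tangential part the same decomposition leaves precisely the two "bad" terms $\na{\bf S}(g\cdot N)_T$ and $\na{\bf T}(g\cdot N)_T$ un-estimated — these are the borderline-singular pieces — and everything else, the remainder, is controlled by $C(\Om)\|g\|_\infty$ using the same size estimates together with the Volterra inversion bound on $\phi$.

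The technical heart is therefore the pointwise kernel analysis: one must split the integrals over $\pa\Om$ into a region $|Q-P_x|\lesssim \de(x):=\text{dist}(x,\pa\Om)$ and its complement, Taylor-expand the $C^2$ surface to compare $\pa_{N(Q)}$ with $\pa_{N(P_x)}$, and track how each derivative falling on $\Ga$ or $E$ changes homogeneity. In the near region one uses $|x-Q|\ge \de(x)$ and the $C^2$ flatness $|N(Q)-N(P_x)|\lesssim |Q-P_x|$; in the far region the Gaussian decay of $\Ga$ and its derivatives does the work. The main obstacle I anticipate is making the normal/tangential splitting of $\na{\bf T}(g\cdot N)$ clean: the composite kernel $\ka$ is itself a surface integral, so $\na\ka$ is a double surface integral with two singular factors, and showing that only the tangential projection of one specific piece fails to be $L^\infty$-bounded (while the normal projection and all cross terms are bounded, uniformly in $t$) requires carefully pairing the $\na$ with $\pa_N\Ga$ versus with $E$ and using the Calderón–Zygmund structure of $\na E$ on the surface. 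Once this splitting is established, the time-uniformity is automatic because every spatial bound obtained is independent of $t$ and every time convolution is against a kernel with $\int_0^\infty$ finite, so $T=\infty$ is allowed with no extra work.
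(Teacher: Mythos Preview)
This proposition is not proved in the present paper; it is quoted from \cite{CC}, and the paper only sketches the mechanism behind it. That mechanism is quite different from yours. In \cite{CC} (as summarized here in Section~\ref{Poissonkernel} and Proposition~\ref{L-1-norm}) one does \emph{not} pass through an unknown density and a Volterra equation: in the half space the solution is written directly as $u_i = \sum_j K_{ij}\ast g_j$ with Solonnikov's explicit Poisson kernel, and the proof consists of showing that $\int_0^\infty\int_{\Rn}|L_{ij}(x',x_n,t)|\,dx'\,dt<C$ for $j\le n-1$ and that $L_{in}-B_{in}=L_{ni}$ enjoys the same bound. The two pieces $\de(t)D_{x_i}E$ and $B_{in}$ sitting inside $K_{in}$ are then \emph{by definition} the kernels of $\na{\bf S}(g_n)_T$ and $\na{\bf T}(g_n)_T$, so the specific subtraction in the statement falls out of the algebra of the Poisson matrix. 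The bounded-domain case is obtained by boundary flattening (cf.\ Section~4) together with an interior bound from \cite{Sh1}.

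Your proposal has a structural gap precisely at the point where the specific correction $\na{\bf S}(g\cdot N)_T+\na{\bf T}(g\cdot N)_T$ must appear. If you represent $u=\mathcal V\phi$ and solve $(\tfrac12 I+\mathcal K)\phi=g$, then the ``bad'' borderline pieces you isolate will naturally be potentials of the density $\phi$, not of $g\cdot N$; to land on the statement as written you would still have to prove that the difference between these potentials of $\phi$ and the potentials of $g\cdot N$ is bounded by $C(\Om)\|g\|_\infty$. That is not a size estimate --- it requires controlling a singular integral of $\phi-g$ (or of $\phi\cdot N-g\cdot N$), and your outline does not supply an argument for it. A second, smaller issue: the $L^\infty$ invertibility of the nonstationary Stokes double layer on a $C^2$ boundary is asserted rather than argued; unlike the caloric case the Stokes boundary operator carries a pressure contribution with a genuinely singular spatial kernel, and the Volterra structure in time alone does not immediately give boundedness on $L^\infty(\pa\Om\times(0,T))$. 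The route taken in \cite{CC} avoids both problems by never introducing an auxiliary density.
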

For the proof of  Proposition \ref{theorem1} in ${\bf R}^n_+ \times
(0,T)$ in \cite{CC}, the authors used the Riesz operator property of
Poisson kernel matrix of Stokes equations obtained by V. A.
Solonnikov (see section \ref{Poissonkernel}). The velocity $u$ is
represented by convolution of Poisson kernel matrix $(K_{ij})_{1
\leq i,j \leq n}$ and boundary data $g$. For $1 \leq i \leq n-1$,
the Poisson kernel $K_{in}$ contains the term
 $\de(t)  D_{x_i} E(x'-y',x_n) + B_{in}(x,t)$
 which is a kernel of integral operator
 $ \na {\bf S}(g_n)_T+ \nabla {\bf T}(g_n)_T $, where
$\de(t)$ is Dirac delta function of time,  $g_n$ is the $n$-th
element of
 $g =(g_1, \cdots, g_n)$ which is the normal component and
\begin{align*}
 B_{in}(x,t) =
 -\int_{{\bf R}^{n-1}}D_{x_n}
\Ga(x^\prime -y^\prime, x_n, t) D_{y_i} E( y^{\prime},0) dy^\prime.
\end{align*}
It is a new observation in this paper that there is a cancelation
between $\na {\bf S}(g_n)$ and $\nabla {\bf T}(g_n)$ as an operator
of BMO(functions of bounded mean oscillation). Nonetheless, the
potential $ \na {\bf S}(g_n)_T+ \nabla {\bf T}(g_n)_T $ could bow up
in $L^\infty$. In fact, we can show that there exists $g_n \in L^p \cap
L^\infty, \, 1 \leq p < \infty$ such that it blows up.
This implies that Proposition \ref{theorem1} is optimal.
\begin{theo}\label{c-example}
 There exists $g \in L^p \cap
L^\infty, \, 1 \leq p < \infty$ such that the velocity $u$ blows up.
\end{theo}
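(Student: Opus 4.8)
The plan is to reduce the statement, via Proposition \ref{theorem1}, to the failure of $L^\infty$-boundedness of the operator $g_n\mapsto w:=\na{\bf S}(g_n)_T+\na{\bf T}(g_n)_T$ alone, and then to exhibit an explicit $g_n$ of product type --- an unbounded Riesz-transform profile in the space variables times a jump in the time variable --- for which $w$ is essentially unbounded near a boundary point. Indeed $|u_T|\le|u|$ pointwise, and Proposition \ref{theorem1} gives $\|u_T-w\|_{L^\infty(\Om\times(0,T))}\le C(\Om)\|g\|_{L^\infty}$, so $u\in L^\infty$ would force $w\in L^\infty$; it thus suffices to produce $g$ bounded on $\pa\Om\times(0,T)$ with $g\cdot N=g_n\in L^p\cap L^\infty$ for every $1\le p<\infty$ and $\int_{\pa\Om}g_n(\cdot,t)\,dQ=0$ for a.e.\ $t$, such that $w\notin L^\infty(\Om\times(0,T))$. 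I would take for $\Om$ a bounded $C^2$ domain with connected boundary possessing a flat piece $\Si\subset\{x_n=0\}$ around $P_0=0$, support $g_n(\cdot,s)$ in $\Si$, and set $g:=g_nN$ on $\pa\Om$ (so that $|g|=|g_n|$ and $g\cdot N=g_n$). As in \cite{CC}, near $P_0$ the $i$-th tangential component ($1\le i\le n-1$) of $w$ acts through the half-space kernel $\de(t)D_{x_i}E(x'-y',x_n)+B_{in}(x'-y',x_n,t)$, the contribution of $\pa\Om\setminus\Si$ and of any cut-off being a bounded operator on $L^\infty$ and hence negligible here.

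Next I would carry out the structural computation. Using that $D_{y_i}E(y',0)$ is, up to a nonzero constant, the $i$-th Riesz kernel $R_i$ on $\Rn$, that $D_{x_n}\Ga(w',x_n,\tau)=-\tfrac{x_n}{2\tau}\Ga(w',x_n,\tau)$, and that the $n$-dimensional Gaussian factors as the $(n-1)$-dimensional heat kernel $\Ga^{(n-1)}_\tau$ times $(4\pi\tau)^{-1/2}e^{-x_n^2/4\tau}$, one gets
$$
B_{in}(x',x_n,\tau)=c_0\,\psi_{x_n}(\tau)\,\bigl(\Ga^{(n-1)}_\tau\ast R_i\bigr)(x'),\qquad \psi_{x_n}(\tau)=\tfrac{x_n}{4\sqrt\pi}\,\tau^{-3/2}e^{-x_n^2/4\tau},
$$
for a nonzero constant $c_0$, and a one-line integration gives $\int_0^\infty\psi_{x_n}(\tau)\,d\tau=\tfrac12$ for every $x_n>0$, so $\{2\psi_{x_n}\}_{x_n>0}$ is a normalized approximate identity in time as $x_n\to0^+$; likewise $\int D_{x_i}E(x'-y',x_n)f(y')\,dy'$ tends to a fixed nonzero constant times $R_if(x')$ as $x_n\to0^+$. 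Consequently, for $g_n(y',s)=a(y')h(s)$ one obtains, near $P_0$ and modulo bounded terms,
$$
w_i(x,t)\;\approx\;\mu\Bigl[\,h(t)\,(R_ia)(x')\;-\;2\!\int_0^t\psi_{x_n}(t-s)\,\bigl(\Ga^{(n-1)}_{t-s}\ast R_ia\bigr)(x')\,h(s)\,ds\,\Bigr],\qquad\mu\ne0.
$$
The relative weight $-2$, equivalent to $\int 2\psi_{x_n}=1$, is exactly what cancels the two terms for \emph{continuous} $h$, in accordance with Proposition \ref{theorem1} and with the boundedness theorem of this paper for log-Dini --- in particular continuous --- normal data; this is the cancellation between $\na{\bf S}$ and $\na{\bf T}$ recorded in the Introduction. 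To break it I would take $h=\chi_{(0,\tau_0)}$ with $0<\tau_0<T$, bounded and compactly supported in time but discontinuous, and for $a$ the classical profile $a=\chi_{\{y_i'>0\}\cap B}$, which is bounded, compactly supported --- hence in every $L^p$ --- and whose Riesz transform $R_ia$ has a logarithmic singularity along $\{x_i'=0\}$ of fixed sign near the center of $B$; subtracting from $a$ its average over $\pa\Om$ restores the compatibility condition without affecting that singularity.

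Finally I would read off the blow-up. On the set $\{\,\tau_0<t<\tau_0+\eta,\ \tfrac12\sqrt{t-\tau_0}<x_n<\sqrt{t-\tau_0},\ |x'|<\rho,\ |x_i'|<x_n/2\,\}$, which has positive measure for each small $\eta>0$, one has $h(t)=0$, whereas
$$
2\int_0^t\psi_{x_n}(t-s)h(s)\,ds=2\int_{t-\tau_0}^{t}\psi_{x_n}(\tau)\,d\tau=\tfrac1{2\sqrt\pi}\int_{(t-\tau_0)/x_n^2}^{t/x_n^2}r^{-3/2}e^{-1/4r}\,dr\ge\te_0>0
$$
for an absolute constant $\te_0$, and all the times $t-s$ entering the integral are of order $x_n^2$, so $\bigl(\Ga^{(n-1)}_{t-s}\ast R_ia\bigr)(x')$ is the mollification of $R_ia$ at scale $\sim x_n$ and, with its fixed sign, has size $\gtrsim\log\tfrac1{x_n}$. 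Since $\psi_{x_n}\ge0$ no cancellation can occur, so $|w_i(x,t)|\gtrsim\te_0\log\tfrac1{x_n}\to\infty$ along such configurations; hence $w\notin L^\infty(\Om\times(0,T))$ and therefore $u\notin L^\infty$. The step I expect to be the main obstacle is this last one: one must (i) make the reduction to the flat half-space rigorous for a genuinely bounded $C^2$ domain, controlling the remainder operators in $L^\infty$, and (ii) verify that the Gaussian mollification $\Ga^{(n-1)}_{t-s}\ast$ does not erode the logarithmic singularity of $R_ia$ on the relevant set --- both are technical, the conceptual heart being the sharp elementary fact that $h(t)-2\psi_{x_n}\ast h(t)$ fails to tend to $0$ as $x_n\to0^+$ precisely when $h$ is not continuous (quantitatively, when $h$ is not log-Dini).
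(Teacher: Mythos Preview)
Your approach is correct and is, at its core, the same mechanism the paper uses: take $g_n(y',s)=a(y')h(s)$ with $a$ an indicator whose Riesz/Hilbert transform has a logarithmic singularity and $h$ an indicator in time, then evaluate just \emph{after} the time support ends so that the $\na{\bf S}$--term vanishes while the $\na{\bf T}$--term retains a nonzero fraction of the (Gaussian--mollified) logarithm. The paper carries this out in the half--plane with $n=2$, $a=\chi_{(-1,1)}$, $h=\chi_{(1/2,1)}$, and evaluates $u^1$ directly at $(x_1,x_2,t)=(1,x_2,1+x_2^2)$; it decomposes $u^1$ via $K_{12}=-L_{21}-B_{12}+\de(t)D_{x_1}E$, uses that the $L_{21}$--piece is bounded by \eqref{0126-1} and the $D_{x_1}E$--piece vanishes for $t>1$, and then computes the $B_{12}$--integral by hand against the explicit Hilbert transform $Hg_2(y_1,\cdot)=\pi^{-1}\bigl(\ln|y_1+1|-\ln|y_1-1|\bigr)\chi_{(1/2,1)}$, obtaining $u^1(1,x_2,1+x_2^2)\le c(\ln x_2+1)$.

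The differences are cosmetic but worth noting. You reduce to $w=\na{\bf S}_T+\na{\bf T}_T$ via Proposition~\ref{theorem1}, whereas the paper computes $u^1$ directly; these are equivalent since the bounded remainder in Proposition~\ref{theorem1} is exactly the $L_{21}$--integral. You work in general $n$ in a genuinely bounded $C^2$ domain with a flat patch and explicitly enforce the compatibility condition, which is more faithful to the statement of the theorem; the paper's proof is written for ${\bf R}^2_+$ without that reduction. Your choice $a=\chi_{\{y_i'>0\}\cap B}$ places the log singularity of $R_ia$ at the center, while the paper's $a=\chi_{(-1,1)}$ puts it at the endpoints, which is why they evaluate at $x_1=1$; either works. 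The technical step you flag --- that the Gaussian mollification at scale $\sim x_n$ does not erase the logarithm --- is exactly what the paper verifies by the explicit one--dimensional integral $\int_{1-x_2}^{1}e^{-|1-y_1|^2/(1+x_2^2-s)}\ln(1-y_1)\,dy_1$, so your plan is sound.
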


From the kernel   $ \de(t)  D_{x_i} E(x'-y',x_n) + B_{in}(x,t)$, if $g_n$ has  Dini-continuity with respect to space, that is,
 $|| f||_{Dini,\pa \Om} = \int_{0}^{r_0} \sup_{x\in \pa \Om} \om(f)(r,x) \frac{dr}{r} < \infty$,
where $\om(f)$ is the modulus of continuity of $f$ such that
$\om(f)(r,x)=\sup_{y\in B_r(x)\cap \pa \Om} |f(y)-f(x)|$, then
integral operators $(\na{\bf S})_T(f)$ and $(\na {\bf T})_T(f)$ are
bounded. This is mainly due to boundedness of Riesz operator in BMO.
Using this fact, it is obtained the following result:
\begin{prop}\label{theorem2}
Suppose that the domain $\Om$ is bounded $C^2$ and $u$ is a solution
to (\ref{maineq}). Suppose $g$ is bounded on $\partial\Om \times
(0,T)$ and the normal component $g_N$ is Dini-continuous with
respect to space. Then, there is a constant $C(\Om)$ depending only
on $\Om$ such that
\begin{align*}
 \max_{(x,t)\in \Omega\times (0,T) } |u(x,t) |
 \leq C ( \Omega) \Big( ( \| g\|_{L^\infty (\pa \Om \times (0,T))}
+ \sup_{t \in (0,T)} \|g_N(\cdot, t)||_{Dini,\partial\Omega} \Big).
\end{align*}
\end{prop}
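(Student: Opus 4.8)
The plan is to deduce Proposition \ref{theorem2} directly from Proposition \ref{theorem1}, so the only real work is to estimate the two boundary potentials $\na {\bf S}(g\cdot N)_T$ and $\na {\bf T}(g\cdot N)_T$ in $L^\infty$ under the extra Dini hypothesis. By Proposition \ref{theorem1} we already have
\[
|u(x,t)| \le C(\Om)\|g\|_{L^\infty} + |\na {\bf S}(g\cdot N)_T(x,t)| + |\na {\bf T}(g\cdot N)_T(x,t)|,
\]
so it suffices to show that for each fixed $t$,
\[
\bigl| \na {\bf S}(g_N(\cdot,t))_T(x)\bigr| + \bigl|\na {\bf T}(g_N)_T(x,t)\bigr| \le C(\Om)\bigl(\|g\|_{L^\infty} + \|g_N(\cdot,t)\|_{Dini,\pa\Om}\bigr).
\]
Since ${\bf S}$ is the single layer potential, $\na {\bf S}(f)(x) = \int_{\pa\Om} \na_x E(x-Q) f(Q)\,dQ$; this is the classical situation in which a Calder\'on--Zygmund kernel acting on a Dini-continuous density produces a bounded (indeed continuous up to the boundary) function. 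The standard device is the subtraction trick: write
\[
\na {\bf S}(f)(x) = \int_{\pa\Om}\na_x E(x-Q)\bigl(f(Q)-f(P_x)\bigr)\,dQ + f(P_x)\int_{\pa\Om}\na_x E(x-Q)\,dQ,
\]
where $P_x$ is the nearest boundary point. The first integral is controlled by $\int_0^{r_0}\om(f)(r)\frac{dr}{r}$ together with $\|f\|_{L^\infty}$ (splitting $\pa\Om$ into annuli $B_{2^{-k}r_0}(P_x)\setminus B_{2^{-k-1}r_0}(P_x)$ and using $|\na_x E(x-Q)| \le c|x-Q|^{1-n}$ on a $C^2$ surface), and the second integral is a bounded function of $x$ because $\pa\Om$ is $C^2$ and $E$ is harmonic away from the origin — this is exactly where the $C^2$ regularity of $\Om$ enters.

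For the parabolic potential $\na {\bf T}(g_N)_T$, recall
\[
\na {\bf T}(f)(x,t) = 4\int_0^t\int_{\pa\Om}\na_x\ka(x-Q,t-s)f(Q,s)\,dQ\,ds,\qquad \ka(x,t) = \int_{\pa\Om}\frac{\pa}{\pa N(\tilde Q)}\Ga(x-\tilde Q,t)E(\tilde Q)\,d\tilde Q.
\]
Here I would again subtract: replace $f(Q,s)$ by $f(Q,s)-f(P_x,t)$ plus the constant $f(P_x,t)$, and then use the two facts that $\int_{\pa\Om}\ka$-type kernels are essentially the tangential Riesz transform of the single layer (so the constant term is handled as above, after using that the time integral of $\na\ka$ against a constant is bounded — this is the BMO/Riesz-operator boundedness already invoked in the text), and that $|\na_x\ka(x-Q,t-s)|$ decays like a heat-type convolution of $|z|^{1-n}$, giving a bound of the form $c\,(t-s)^{-1}\bigl(\text{spatial factor}\bigr)$ whose $s$-integral, combined with the spatial annular decomposition in $|Q-P_x|$, reproduces $\int_0^{r_0}\om(f)(r)\frac{dr}{r}$. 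Because the Dini hypothesis is assumed uniformly in $t$ (the statement has $\sup_{t}\|g_N(\cdot,t)\|_{Dini}$), no regularity in time is needed at this stage: the time variable only appears as an integration against a kernel that is integrable in $s$ after the cancellation is exploited.

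The main obstacle is the constant term $f(P_x,t)\,\na{\bf T}(1)$ and, more generally, controlling $\na{\bf T}$ applied to something that is merely bounded in space — a naive bound on $\int_0^t\int_{\pa\Om}|\na_x\ka|\,dQ\,ds$ diverges, which is precisely the phenomenon behind Theorem \ref{c-example}. The resolution is that one must not estimate $\na{\bf S}$ and $\na{\bf T}$ separately on the constant piece; instead one uses the cancellation between $\na{\bf S}(f)_T$ and $\na{\bf T}(f)_T$ as a BMO operator noted in the paragraph before Theorem \ref{c-example}, i.e.\ the combined kernel $\de(t)D_{x_i}E(x'-y',x_n) + B_{in}(x,t)$ is a Calder\'on--Zygmund-type kernel bounded on BMO, and then note that a Dini-continuous function lies in a space on which a BMO-bounded singular integral is in fact $L^\infty$-bounded (the Dini modulus controls the oscillation integral that converts the BMO bound into a pointwise bound). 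So the real content is: (i) localize via $P_x$ and $t$; (ii) on the oscillation piece $f(Q,s)-f(P_x,t)$ use direct kernel estimates and the Dini integral; (iii) on the constant piece use the cancellation/BMO boundedness of the combined Stokes kernel rather than the triangle inequality. Assembling (i)--(iii) and adding the $C(\Om)\|g\|_{L^\infty}$ term from Proposition \ref{theorem1} yields the claimed bound.
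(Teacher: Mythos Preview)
Your reduction to estimating $\nabla{\bf S}(g_N)_T+\nabla{\bf T}(g_N)_T$ via Proposition~\ref{theorem1} is right, and the subtraction trick for $\nabla{\bf S}$ is standard. The gap is in your treatment of $\nabla{\bf T}$: you subtract the constant $f(P_x,t)$, but then the oscillation $f(Q,s)-f(P_x,t)=\bigl(f(Q,s)-f(P_x,s)\bigr)+\bigl(f(P_x,s)-f(P_x,t)\bigr)$ contains a pure time increment that the Dini-in-\emph{space} hypothesis does not control. That term is only bounded by $2\|g_N\|_{L^\infty}$, and integrating $\|g_N\|_{L^\infty}$ against $|\nabla_x\kappa|$ over $(0,t)\times\partial\Omega$ diverges --- this is precisely the obstruction behind Theorem~\ref{c-example}. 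So step~(ii) does not close, and step~(iii) does not rescue it: the cancellation between $\nabla{\bf S}$ and $\nabla{\bf T}$ you invoke is the mechanism for Theorem~\ref{s-t-theorem} (the logDini-in-\emph{time} result), not for this proposition.

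The paper's route, indicated in the sentence just before the statement, is that $(\nabla{\bf S})_T$ and $(\nabla{\bf T})_T$ are \emph{separately} bounded on Dini-in-space data; no cancellation between them is needed. The key is the factorization \eqref{0920}: $B_{in}(x,t)=-D_{x_n}\Gamma_1(x_n,t)\,(\Gamma'\ast D_{y_i}E(\cdot,0))(x')$, so $\nabla{\bf T}_T(g_n)$ is the heat double layer $-2D_{x_n}\Gamma$ (whose $L^1$-norm in $(y',t)$ is bounded uniformly in $x_n$) applied to the \emph{spatial} Riesz transform $R_i g_n(\cdot,s)=D_{y_i}E(\cdot,0)\ast g_n(\cdot,s)$, computed at each fixed time $s$. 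Since $g_n(\cdot,s)$ is Dini uniformly in $s$, $R_i g_n(\cdot,s)\in L^\infty$ uniformly in $s$ by the classical fact that Calder\'on--Zygmund operators take Dini-continuous functions to $L^\infty$ (this is what the ``boundedness of Riesz operator in BMO'' remark points to); the outer convolution is then trivially $L^\infty\to L^\infty$. If you want to keep a subtraction argument for $\nabla{\bf T}$, subtract $f(P_x,s)$ rather than $f(P_x,t)$: the oscillation becomes purely spatial and the annular decomposition works, while the leftover term carries $\int_{\partial\Omega}\nabla_x\kappa(x-Q,\tau)\,dQ$, which vanishes in the half space by oddness and is bounded on a $C^2$ boundary.
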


Proposition \ref{theorem2} means that the maximum modulus principle of the Stokes equations \eqref{maineq} can be obtained by
the Dini-continuity with respect to the space only. It is important
to clarify that the maximum modulus of solution remains bounded
under the time continuity of boundary data without continuity
assumption in space. In this paper, we answer to the question, in
fact, we find a cancelation between $\de(t)  D_{x_i} E(x'-y',x_n)$
and $B_{in}(x,t)$ in logDini-continuous functions in time. We say
that $f :{\bf R} \ri {\bf R}$ is logDini-continuous if
 $|| f||_{logDini,(0,T)}:=  \int_{0}^{r_0} \sup_{t \in (0,T)} \om(f)(r,t) \frac{|\ln r|}{r} dr < \infty$,
where $\om(f)(r,t)=\sup_{s\in (t-r, t+r) \cap (0,T)} |f(s)-f(t)|$.
To handle the initial time $t=0$, we extend $g_N$ to ${\bf R}$ by
$$
g_N(x,t) =0 \quad\mbox{for} \quad x\in \partial\Omega \quad t\leq 0.
$$
\begin{theo}\label{s-t-theorem}
Suppose that the domain $\Omega$ is bounded $C^2$ and $u$ is a
solution to (\ref{maineq}) for bounded boundary data $g$. Suppose
that $g_N$ is logDini-continuous with respect to time. Then,
\begin{align*}
& \max_{(x,t)\in \Omega\times (0,T)} |\nabla {\bf S}(g_N)_T (x,t)
+\nabla {\bf T}(g_N)_T (x,t) |\\
& \hspace{10mm}  \leq C(\Omega) \Big(\| g_N \|_{L^\infty(\pa \Om
\times (0,T))} +  \sup_{x \in \pa \Om}\| g_N(x,
\cdot)\|_{logDini}\Big).
\end{align*}
\end{theo}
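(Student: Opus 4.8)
The plan is to localize the problem to the half-space model $\mathbf{R}^n_+\times(0,T)$ via a $C^2$ partition of unity and the usual flattening-of-the-boundary maps (as in the proof of Proposition~\ref{theorem1} in \cite{CC}), so that near a boundary point the combined potential $\nabla\mathbf{S}(g_N)_T+\nabla\mathbf{T}(g_N)_T$ is represented, up to bounded error terms controlled by $\|g_N\|_{L^\infty}$, as a convolution against the kernel $\delta(t)D_{x_i}E(x'-y',x_n)+B_{in}(x,t)$ for $1\le i\le n-1$. The point of the theorem is precisely that these two pieces, which individually are not $L^\infty$-bounded operators, cancel when $g_N$ has logDini modulus in time. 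So the heart of the matter is the explicit kernel estimate: I would first record the pointwise bounds on $B_{in}(x,t)$ and on $\partial_t B_{in}(x,t)$. Since $B_{in}(x,t)=-\int_{\mathbf{R}^{n-1}}D_{x_n}\Ga(x'-y',x_n,t)D_{y_i}E(y',0)\,dy'$ and $D_{y_i}E(y',0)$ is homogeneous of degree $-(n-1)$, one gets, after integrating by parts in $y'$ and using standard heat-kernel Gaussian bounds, estimates of the form $|B_{in}(x,t)|\lesssim (|x|^2+t)^{-n/2}$ and $|\partial_t B_{in}(x,t)|\lesssim (|x|^2+t)^{-(n+2)/2}$, together with the crucial identity that $\int_{\mathbf{R}^{n-1}}B_{in}(x'-y',x_n,t)\,dy'$ is, up to constants, $D_{x_i}$ of the heat extension of a constant, hence the time-integral of $B_{in}$ against the \emph{constant} boundary value reproduces exactly $-\delta$-type behaviour that annihilates the $\delta(t)D_{x_i}E$ term.

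With these in hand, the main step is the cancellation argument at a fixed $(x,t)$ with $x_n=\operatorname{dist}(x,\partial\Omega)=:d$ small. Write $g_n$ for the (extended, localized) normal datum. The $\delta(t)$-term contributes $c\,D_{x_i}\mathbf{S}(g_n(\cdot,t))(x)=c\int_{\mathbf{R}^{n-1}}D_{x_i}E(x'-y',d)\,g_n(y',t)\,dy'$, while the $B_{in}$-term contributes $4\int_0^t\!\!\int_{\mathbf{R}^{n-1}}B_{in}(x'-y',d,t-s)\,g_n(y',s)\,dy'\,ds$. In the first integral I use that $\int_{\mathbf{R}^{n-1}}D_{x_i}E(x'-y',d)\,dy'=0$ to subtract $g_n(x',t)$ for free; in the second I split $g_n(y',s)=[g_n(y',s)-g_n(y',t)]+g_n(y',t)$ and, in the $g_n(y',t)$ piece, do the $y'$-integration first using the reproducing identity above so that it cancels the (subtracted) $\delta(t)$-term up to a controlled remainder. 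What survives is an integral of $B_{in}$ against the time-difference $g_n(y',s)-g_n(y',t)$, which is $\le \om(g_n)(t-s,\cdot)$ in absolute value, plus a spatial piece of the first integral against $g_n(y',t)-g_n(x',t)$ which is handled by the $L^\infty$-bound (this is where one genuinely uses that \emph{no} spatial regularity is needed — the spatial kernel $D_{x_i}E(\cdot,d)$ has an integrable-against-$L^\infty$ tail once $d>0$ is fixed, producing the $\log d$ that is then absorbed). Then
\begin{align*}
\Big|\big(\nabla\mathbf{S}(g_N)_T+\nabla\mathbf{T}(g_N)_T\big)(x,t)\Big|
&\lesssim \|g_n\|_{L^\infty}+\int_0^t \om(g_n)(t-s,\cdot)\,\big|\partial_t B_{in}\text{-type kernel}\big|\,ds\\
&\lesssim \|g_n\|_{L^\infty}+\int_0^t \om(g_n)(\tau,\cdot)\,\frac{d\tau}{\tau}\,\big(\text{with a }|\log\tau|\text{ factor}\big),
\end{align*}
after performing the $y'$-integration of the kernel bound $(|x'-y'|^2+d^2+\tau)^{-(n+2)/2}$ over $\mathbf{R}^{n-1}$, which yields $\tau^{-1}$ for $\tau\gtrsim d^2$ and a $d$-dependent constant for $\tau\lesssim d^2$; carrying the $d\le 1$ bookkeeping carefully is exactly what promotes the plain Dini weight $d\tau/\tau$ to the logDini weight $|\log\tau|\,d\tau/\tau$. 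The last line is $\le C\sup_{x\in\partial\Omega}\|g_n(x,\cdot)\|_{logDini}$ by definition, giving the claim.

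I expect the main obstacle to be the bookkeeping near $\tau\lesssim d^2$ and near $s\to 0$: one must show that the $d$-dependent constants generated when $\tau<d^2$ recombine with the subtracted $\delta(t)$-term rather than blowing up as $d\to 0$, and that the extension $g_N\equiv 0$ for $t\le 0$ makes the $s\to 0^+$ endpoint harmless (here logDini continuity across $t=0$, i.e. $\om(g_n)(r,t)\to 0$ as $r\to0$ uniformly, is used). A secondary technical point is controlling the error terms produced by flattening the boundary and by the cutoff — their kernels are no worse than weakly singular Calderón–Zygmund kernels on $\partial\Omega\times(0,T)$, so they map $L^\infty$ to $L^\infty$ on the bounded domain, but this has to be stated cleanly so that the final constant depends only on $\Omega$. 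Everything else — the Gaussian bounds on $B_{in}$ and $\partial_t B_{in}$, the two vanishing-moment identities, and the final application of Fubini — is routine once the cancellation is set up correctly.
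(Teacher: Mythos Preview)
Your overall architecture --- flatten the boundary, reduce to the half-space kernel $\de(t)D_{x_i}E+B_{in}$, and exploit a reproducing identity so that the $\mathbf S$ and $\mathbf T$ pieces cancel on the time-frozen datum $g_n(\cdot,t)$, leaving only a time-oscillation integral --- is exactly the route the paper takes. But two of the steps you describe would not go through as written.

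First, the reproducing identity is misstated. The spatial integral $\int_{\Rn}B_{in}(x'-y',x_n,t)\,dy'$ is not ``$D_{x_i}$ of the heat extension of a constant''; by oddness it is simply $0$, so the cancellation for constants is trivial and says nothing. The identity that actually does the work is the \emph{time} integral
\[
\int_0^\infty B_{in}(x,s)\,ds \;=\; \int_{\Rn} D_{x_n}E(x'-z',x_n)\,D_{z_i}E(z',0)\,dz' \;=\; D_{x_i}E(x),
\]
which follows from $\int_0^\infty D_{x_n}\Ga(\cdot,s)\,ds=-D_{x_n}E$ together with the harmonic convolution identity $D_{x_n}E(\cdot,x_n)\ast' D_{x_i}E(\cdot,0)=D_{x_i}E(\cdot,x_n)$. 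This cancels $D_{x_i}\mathbf S(g_n(\cdot,t))$ exactly, for \emph{any} $g_n(\cdot,t)$ --- so your separate ``spatial piece against $g_n(y',t)-g_n(x',t)$'' never appears, and the ``$\log d$ that is then absorbed'' has nothing to be absorbed into (indeed that piece, taken on its own, is $\sim\|g_n\|_{L^\infty}\log(1/d)$ and is \emph{not} uniformly bounded). What remains after the exact cancellation is the time-oscillation term $J_1=\int_0^t B_{in}(\cdot,t-s)[g_n(\cdot,s)-g_n(\cdot,t)]\,ds$ and a tail $J_2=\int_t^\infty B_{in}(\cdot,s)\,g_n(\cdot,t)\,ds$; the tail is controlled only because the compatibility condition gives $g_n(\cdot,0)=0$, hence $|g_n(\cdot,t)|\le\om(g_n)(s,t)$ for $s\ge t$, which you do not mention.

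Second, and this is the real gap, your mechanism for the $|\log\tau|$ weight is wrong. The pointwise bound $|B_{in}(x,t)|\lesssim(|x|^2+t)^{-n/2}$ (or the $\pa_t$ version --- which incidentally has no role here, since $J_1$ involves $B_{in}$, not $\pa_t B_{in}$), when integrated in $y'$ over $\Rn$, gives $(d^2+\tau)^{-1/2}$, not $\tau^{-1}$; no ``$d\le 1$ bookkeeping'' upgrades this to $\tau^{-1}|\log\tau|$. The paper instead keeps the factorization $B_{in}(x,t)=-D_{x_n}\Ga_1(x_n,t)\cdot\int_{\Rn}\Ga'(z',t)D_{z_i}E(x'-z',0)\,dz'$ intact and proves the sharp estimate (its Lemma~\ref{120907-2})
\[
\int_{|y'|\le M}\Big|\int_{\Rn}\Ga'(x'-z',s)\,D_{z_i}E(y'-z',0)\,dz'\Big|\,dy' \;\le\; C\big(1+\ln M-\ln\min(\sqrt s,M)\big),
\]
valid uniformly in $x'$. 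Combined with $\sup_{x_n>0}|D_{x_n}\Ga_1(x_n,s)|\le Cs^{-1}$, this yields $J_1\le C\int_0^t \om(g_n)(s,\cdot)\,s^{-1}(1+|\ln s|)\,ds$, which is precisely the logDini seminorm. The logarithm comes from the $L^1_{y'}$ norm over the \emph{bounded} support $|y'|\le M$ of the Riesz-transformed heat kernel $\Ga'(\cdot,s)\ast' D_i E(\cdot,0)$, at scales $\sqrt s\ll M$; a pointwise bound on $B_{in}$ is too crude to see this.
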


With Proposition \ref{theorem1}, we get
\begin{theo}
Suppose that the domain $\Omega$ is bounded $C^2$ and $u$ is a
solution to (\ref{maineq}) for bounded boundary data $g$. Suppose
that $g_N$ is logDini-continuous with respect to time. Then,  the solution  $u$ of \eqref{maineq}
satisfies
\begin{align*}
 \|u \|_{L^\infty (\Omega\times (0,T))}
 \leq C(\Omega) \Big(\| g\|_{L^\infty(\pa \Om \times (0,T))}
 +   \sup_{Q \in \pa \Om} \| g_N(Q, \cdot)\|_{logDini}\Big).
\end{align*}

\end{theo}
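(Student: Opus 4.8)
The plan is to combine the two main ingredients already assembled in the paper: Proposition \ref{theorem1}, which controls everything in the solution except the singular potential $\nabla {\bf S}(g_N)_T + \nabla {\bf T}(g_N)_T$ by $\|g\|_{L^\infty}$, and Theorem \ref{s-t-theorem}, which controls precisely that potential by $\|g_N\|_{L^\infty} + \sup_{Q}\|g_N(Q,\cdot)\|_{logDini}$. So the final statement should follow essentially by addition once both are in hand.

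Concretely, I would first decompose the tangential component via the triangle inequality:
\begin{align*}
|u_T(x,t)| \leq \big| u_T(x,t) - \nabla {\bf S}(g_N)_T(x,t) - \nabla {\bf T}(g_N)_T(x,t)\big| + \big|\nabla {\bf S}(g_N)_T(x,t) + \nabla {\bf T}(g_N)_T(x,t)\big|.
\end{align*}
The first term on the right is bounded by $C(\Omega)\|g\|_{L^\infty(\partial\Omega\times(0,T))}$ by the second assertion of Proposition \ref{theorem1} (after noting $g\cdot N$, as used there, is the scalar whose associated normal vector field is $g_N$, so ${\bf S}(g\cdot N)_T$ and ${\bf S}(g_N)_T$ refer to the same object up to the identification already fixed in the paper). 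The second term is bounded by $C(\Omega)\big(\|g_N\|_{L^\infty(\partial\Omega\times(0,T))} + \sup_{Q\in\partial\Omega}\|g_N(Q,\cdot)\|_{logDini}\big)$ by Theorem \ref{s-t-theorem}. Since $\|g_N\|_{L^\infty} \leq \|g\|_{L^\infty}$, summing gives $|u_T(x,t)| \leq C(\Omega)\big(\|g\|_{L^\infty(\partial\Omega\times(0,T))} + \sup_{Q\in\partial\Omega}\|g_N(Q,\cdot)\|_{logDini}\big)$ uniformly in $(x,t)\in\Omega\times(0,T)$. The normal component $u_N$ is already bounded by $C(\Omega)\|g\|_{L^\infty}$ directly from the first assertion of Proposition \ref{theorem1}, and since $|u| \leq |u_N| + |u_T|$, adding the two estimates and absorbing constants finishes the proof.

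I do not expect a genuine obstacle here, since the hard analytic work is entirely contained in Theorem \ref{s-t-theorem}; the only points requiring a line of care are bookkeeping ones. First, one must make sure the $logDini$ hypothesis is stated for $g_N$ with the zero extension to $t\le 0$ exactly as set up before Theorem \ref{s-t-theorem}, so that the supremum $\sup_{Q\in\partial\Omega}\|g_N(Q,\cdot)\|_{logDini}$ is finite and the extension does not create a jump that violates log-Dini continuity at $t=0$ — but this is guaranteed because $u|_{t=0}=0$ forces $g_N(\cdot,0)=0$ in the relevant trace sense, so the zero extension is consistent. Second, one should confirm that the constant $C(\Omega)$ can indeed be taken uniform over $0<T\le\infty$; this is inherited from the corresponding uniformity already present in Proposition \ref{theorem1} and Theorem \ref{s-t-theorem}. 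With those remarks in place the argument is just the triangle inequality applied twice, so the proof is short.
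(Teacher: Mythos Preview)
Your proposal is correct and matches the paper's approach exactly: the paper states this theorem immediately after Theorem \ref{s-t-theorem} with the single phrase ``With Proposition \ref{theorem1}, we get,'' and offers no further proof. Your triangle-inequality decomposition of $u_T$ together with the direct bound on $u_N$ from Proposition \ref{theorem1} is precisely the intended combination, and your bookkeeping remarks on the zero extension at $t=0$ and uniformity in $T$ are accurate and in fact more carefully spelled out than in the paper itself.
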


We present the paper in the following way. In section 2, we discuss
about the Poisson kernel of the nonstationary Stokes equations in
$\R_+ \times (0,T)$. In section 3, we prove the main Theorem
\ref{s-t-theorem} in the half space. In section 4, we prove the
Theorem \ref{s-t-theorem}. In section 5, we prove the Theorem \ref{c-example}.

\section{Kernels on  half plane}\label{Poissonkernel}
\setcounter{equation}{0}
For notation,  we denote $x=(x^{\prime},x_n)$,
that is, $x^{\prime}=(x_1,x_2,\cdot\cdot\cdot,x_{n-1})$. Indeed, the symbol $ {\prime}$
means the coordinate up to $n-1$ and
 $\omega_n$ is the surface area of the unit sphere in ${\bf R}^n$. We also denote that $D_{x_i} u$
  are partial derivatives of $u$ with respect to $x_i, \,\, 1 \leq i \leq n$, that is, $D_{x_i} u(x) =
\frac{\pa }{\pa x_i}u(x)$.

 We let $E$ and $\Gamma$  be the fundamental solutions to the Laplace equation
 and the heat equation, respectively, such that
\begin{align*}
E(x,t) = -\frac{1}{(n-2)\om_n} \frac{1}{|x|^{n-2}}, \qquad
\Gamma(x,t)  = \left\{\begin{array}{ll} \vspace{2mm}
\frac{1}{\sqrt{2\pi t}^n} e^{ -\frac{|x|^2}{2t} }, &   t > 0\\
  0, & t < 0,
\end{array}
\right.
\end{align*}
where $\om_n$ is the measure of the unit sphere in $\R$.

The Poisson kernel $(K,\pi ) $ for the half space is defined by
\begin{align*}
\begin{array}{ll}\vspace{2mm}
K_{ij}(x'-y',x_n,t) &  =  -2 \delta_{ij} D_{x_n}\Ga(x'-y', x_n,t)  -L_{ij} (x'-y',x_n,t) \\ \vspace{2mm}
& \quad +  \de_{jn} \de(t)  D_{x_i} E(x'-y',x_n),\\ \vspace{2mm}
\pi_j (x'-y',x_n,t )&=-2\delta(t) D_{x_j}D_{x_n}E(x'-y',x_n)+4 D_{x_n}D_{x_n}A(x'-y',x_n,t)\\
&\quad +4D_{t} D_{x_j}A(x'-y',x_n,t),
\end{array}
\end{align*}
where $\delta(t)$ is the Dirac delta function and $\delta_{ij}$ is the Kronecker delta function.
Here, we defined  that
\begin{align*}
 {L}_{ij} (x,t) & =  D_{x_j}\int_0^{x_n} \int_{\Rn}   D_{z_n}    \Ga(z,t) D_{x_i}   E(x-z)  dz,\\
 A(x,t)&=\int_{\Rn}\Ga(z^{\prime},0,t)E(x^{\prime}-z^{\prime},x_n)dz^{\prime}.
\end{align*}

We have relations among $L$ such that
\begin{align} \label{1006-3}
\sum_{1 \leq i \leq n} L_{ii} = -2D_{x_n} \Ga, \qquad  L_{in} = L_{ni}  + B_{in},
\end{align}
where
\begin{align}
 B_{in}(x,t) = \left\{\begin{array}{cl} \vspace{2mm}
 -\int_{{\bf R}^{n-1}}D_{x_n}
\Ga(x^\prime -y^\prime , x_n, t) D_{y_i} E( y^{\prime},0) dy^\prime =  \frac{\partial}{x_i} \kappa (x,t)
 & \quad  i\neq n,\\
0 & \quad i = n
\end{array}
\right.
\end{align}
 (see \cite{K} and \cite{So}).

The solution $(u, p)$ of the Stokes system \eqref{maineq} in $\Om = {\bf R}^n_+$ with boundary data
$g$ is expressed by
\begin{align}\label{representation}
\begin{array}{ll}\vspace{2mm}
u^i(x,t) &= \sum_{j=1}^{n}\int_0^t \int_{\Rn}
K_{ij}( x^{\prime}-y^{\prime},x_n,t-s)g_j(y^{\prime},s) dy^{\prime}ds,\\
p(x,t) &= \sum_{j=1}^{n}\int_0^t \int_{\Rn} \pi_j(x^{\prime}-y^{\prime},x_n,t-s) g_j(y^{\prime},s)
dy^{\prime}ds.
\end{array}
\end{align}
(See \cite{K} and \cite{So}).

\section{Maximum modulus estimate in the half space}
\setcounter{equation}{0}
In this section, we consider the maximum modulus estimate in the half space.
In \cite{CC}, the authors proved the following proposition.
\begin{prop}\label{L-1-norm}
Let $1 \leq i \leq n$ and $1 \leq j \leq n-1$. Then
\begin{align}\label{0126-1}
\int_0^\infty \int_{{\bf R}^{n-1}} |L_{ij}(x',x_n,t)| dx'dt < C,
\end{align}
where $C>0$ is independent of $x_n >0$ and hence  it follows that
\begin{align}\label{0126-2}
\int_0^\infty \int_{{\bf R}^{n-1}} |L_{in}(x',x_n,t) - B_{in}(x^\prime,x_n,t)| dx'dt < C,
\end{align}
where $C>0$ is independent of $x_n >0$.
\end{prop}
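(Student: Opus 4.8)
The plan is to reduce to the unit scale $x_n=1$ by homogeneity, to rewrite $L_{ij}$ so that only one derivative of $E$ and two derivatives of $\Ga$ appear, and then to extract the decay that $L^1$-integrability requires from the cancellation $\int_{\Rn}D_{z_j}D_{z_n}\Ga(z',z_n,t)\,dz'=0$. First I would record that a change of variables in the defining integral of $L_{ij}$ gives $L_{ij}(\la x,\la^2 t)=\la^{-(n+1)}L_{ij}(x,t)$ for all $\la>0$; substituting $x'=x_n y'$, $t=x_n^2 s$ then shows
\begin{align*}
\int_0^\infty\!\!\int_{\Rn}|L_{ij}(x',x_n,t)|\,dx'dt=\int_0^\infty\!\!\int_{\Rn}|L_{ij}(y',1,s)|\,dy'ds ,
\end{align*}
so it suffices to prove \eqref{0126-1} for $x_n=1$, and the bound so obtained is automatically independent of $x_n$.

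Since $j\le n-1$, the operator $D_{x_j}$ commutes with the $z_n$-integration, $D_{x_j}[D_{x_i}E(x-z)]=-D_{z_j}[D_{x_i}E(x-z)]$, and the $z_j$-line has no boundary; integrating by parts in $z_j$ gives $L_{ij}(x,t)=\int_0^{x_n}\int_{\Rn}D_{z_j}D_{z_n}\Ga(z,t)\,D_{x_i}E(x-z)\,dz'dz_n$, the integral being absolutely convergent because $|D_{x_i}E(x-z)|\le C|x-z|^{1-n}$ is locally integrable while $D_{z_j}D_{z_n}\Ga(\cdot,t)$ is bounded near $z=x$ and rapidly decaying. When $i\le n-1$ I would integrate by parts once more, using $D_{x_i}E(x-z)=-D_{z_i}E(x-z)$, to land everything on $\Ga$: $L_{ij}=\int_0^{x_n}\int_{\Rn}D_{z_i}D_{z_j}D_{z_n}\Ga(z,t)\,E(x-z)\,dz$. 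This replaces $D_{x_i}E$ by the less singular kernel $E$ at the cost of one more power of $t^{-1/2}$ and gains a second vanishing $z'$-moment for the weight, which is useful in the borderline regime below.

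With $x_n=1$, write $s:=1-z_n\in(0,1)$ and $d:=(|x'|^2+s^2)^{1/2}$. Using $\int_{\Rn}D_{z_j}D_{z_n}\Ga(z',z_n,t)\,dz'=0$, replace $D_{x_i}E(x-z)$ by its difference from its value at $z'=0$; on $|z'|<d/2$ the bracketed factor is then $O(|z'|\,d^{-n})$ (and $O(|z'|^2d^{-n})$ after the second integration by parts), while on $|z'|\ge d/2$ the Gaussian tail of the weight is used directly. I would then split the range of $(x',z_n,t)$ according to $t\lessgtr1$ and to $|x'|$ small or large relative to $\max(1,\sqrt t)$, and integrate successively in $x'$ (using $\int_{\Rn}(|x'|^2+a^2)^{-\mu/2}dx'=c_{n,\mu}\,a^{\,n-1-\mu}$, valid for $\mu>n-1$), then in $z_n$, then in $t$, checking in each piece that the exponents conspire to give a finite, $x_n$-independent bound. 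For $t\lesssim1$ the Gaussian confines $z_n$ to $z_n\lesssim\sqrt t$, so $s=1-z_n$ is bounded below and $D_{x_i}E(x-z)$ stays away from its singularity; together with $\int_{\Rn}|z'|\,|D_{z_j}D_{z_n}\Ga(z,t)|\,dz'\lesssim z_n\,t^{-3/2}e^{-z_n^2/(2t)}$ this yields $|L_{ij}(x',1,t)|\lesssim t^{-1/2}$ on $|x'|\lesssim1$ and $\lesssim|x'|^{-n}t^{-1/2}$ on $|x'|\gtrsim1$, both integrable. For $t\gtrsim1$ one isolates the bounded set $\{|x'|\lesssim1,\ 0<z_n<1\}$, on which $\Ga(z,t)\approx(2\pi t)^{-n/2}$ is essentially constant and $\int_{|x-z|\lesssim1}|x-z|^{1-n}\,dz<\infty$, giving $|L_{ij}|\lesssim t^{-(n+3)/2}$, while on $|x'|\gtrsim\sqrt t$ the cancellation again produces $|x'|^{-n}$-decay, so that after the $x'$-integration the contribution is $O(t^{-2})$. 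The hard part is exactly the near-diagonal region $|x'|\sim s\sim\sqrt t$: there $D_{x_i}E(\cdot,s)$ (for $i\le n-1$) only just fails to be integrable in $x'$ at infinity, so the mean value---and, after the second integration by parts, the first $z'$-moment---of the heat-kernel weight must be used to recover the missing decay, and balancing the $t^{-1/2}$ losses from the extra derivatives against the $(1-z_n)^{-1}$-type growth produced by the $x'$-integration, especially when $z_n\to1$ and the singularity of $D_{x_i}E(x-z)$ enters the support of the Gaussian, is the delicate bookkeeping to carry out.

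Finally, \eqref{0126-2} follows from \eqref{1006-3}: there $L_{in}-B_{in}=L_{ni}$, which for $i\le n-1$ is already covered by \eqref{0126-1}; for $i=n$ one has $B_{nn}=0$ and, again by \eqref{1006-3}, $L_{nn}=-2D_{x_n}\Ga-\sum_{k=1}^{n-1}L_{kk}$, where $\int_0^\infty\int_{\Rn}|D_{x_n}\Ga(x',x_n,t)|\,dx'dt=1$ by a one-line computation and $\int_0^\infty\int_{\Rn}|L_{kk}(x',x_n,t)|\,dx'dt<C$ for each $k\le n-1$ by \eqref{0126-1}.
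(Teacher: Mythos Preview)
The paper does not prove this proposition at all: it is quoted verbatim from \cite{CC}, with the sentence ``In \cite{CC}, the authors proved the following proposition'' preceding the statement and no proof following it. There is therefore no in-paper argument to compare your proposal against.

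That said, a few remarks on your sketch. Your reduction of \eqref{0126-2} to \eqref{0126-1} via the identities \eqref{1006-3} is correct and complete, and it is exactly the mechanism the paper intends by the phrase ``and hence it follows that'': $L_{in}-B_{in}=L_{ni}$ is covered by \eqref{0126-1} for $i\le n-1$, while $L_{nn}=-2D_{x_n}\Ga-\sum_{k<n}L_{kk}$ reduces $i=n$ to the same estimate plus the elementary bound $\int_0^\infty\int_{\Rn}|D_{x_n}\Ga|\,dx'dt<\infty$. Your scaling reduction to $x_n=1$ is also valid: $L_{ij}(\la x,\la^2 t)=\la^{-(n+1)}L_{ij}(x,t)$ makes the $L^1(dx'\,dt)$ norm dimensionless in $x_n$, and the integration by parts moving $D_{x_j}$ (with $j\le n-1$) onto $\Ga$ is justified.

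What is not complete is the core of \eqref{0126-1}. You yourself flag the near-diagonal regime $|x'|\sim 1-z_n\sim\sqrt t$ as ``the delicate bookkeeping to carry out'' and do not carry it out; your large-$t$ discussion also skips the intermediate zone $1\lesssim|x'|\lesssim\sqrt t$, and the claim that ``$s=1-z_n$ is bounded below'' for all $t\lesssim 1$ is only literally true for $t$ bounded away from $1$. These are precisely the places where the $|x-z|^{1-n}$ singularity of $D_{x_i}E$ meets the support of the Gaussian and where the borderline non-integrability must be rescued by the mean-zero (and, after your second integration by parts, first-moment) cancellation. Until those regions are actually estimated, the argument for \eqref{0126-1} remains a plausible outline rather than a proof.
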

The maximum modulus theorem for the half space follows from proposition \ref{L-1-norm} (see \cite{CC}).

\begin{prop}\label{propCC}
Let $g= (g_1, g_2, \cdots,  g_n) \in L^\infty(\Rn \times ( 0, T ))$ and $(u,p)$ is
represented by \eqref{representation}. Then,
\begin{align}
\| u_T - \nabla {\bf S}_T (g_n) - \nabla {\bf T}_T (g_n) \|_{L^\infty (\R_+ \times (0, T))}
\leq C \| g\|_{L^\infty(\partial{\bf R}^{n}_{+} \times (0,T))}
\end{align}
for some $C> 0$ independent of $T$. Furthermore, the normal
component of the velocity $u$ is bounded and  there is also a
constant  $C$ such that
$$
\|u_n\|_{L^\infty({\bf R}^n_+\times (0,T))}
 \leq C  \|g\|_{L^\infty(\pa {\bf R}^n_+\times (0,T))}.
$$
\end{prop}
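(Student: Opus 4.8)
The plan is to derive Proposition \ref{propCC} directly from the $L^1$-in-$(x',t)$ bounds of Proposition \ref{L-1-norm}, using the explicit form of the Poisson kernel $K_{ij}$ recalled in Section \ref{Poissonkernel}. Write the representation \eqref{representation} for $u^i$ and split the kernel into its three pieces $-2\delta_{ij}D_{x_n}\Ga$, $-L_{ij}$, and $\delta_{jn}\delta(t)D_{x_i}E$. The heat-kernel term contributes $-2\int_0^t\int_{\Rn}D_{x_n}\Ga(x'-y',x_n,t-s)\,g_i(y',s)\,dy'ds$, and since $\int_0^\infty\int_{\Rn}|D_{x_n}\Ga(x',x_n,t)|\,dx'dt$ is bounded uniformly in $x_n>0$ (a standard heat-kernel computation, essentially $\int|D_{x_n}\Ga|\,dx'=c\,x_n^{-1}e^{-x_n^2/2t}\cdot(\dots)$ integrating to a constant in $t$), this term is controlled by $\|g\|_{L^\infty}$. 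The Dirac-delta term, present only for $j=n$, is exactly $\nabla{\bf S}(g_n)$ evaluated on the half space boundary data; its tangential part is $\nabla{\bf S}_T(g_n)$.

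The main point is the treatment of the $L_{ij}$ terms. For $j\le n-1$ and any $i$, \eqref{0126-1} gives $\int_0^\infty\int_{\Rn}|L_{ij}(x',x_n,t)|\,dx'dt\le C$ uniformly in $x_n$, so each such convolution is bounded by $C\|g\|_{L^\infty}$ directly by Young's inequality (the convolution of an $L^1$ kernel with an $L^\infty$ function is $L^\infty$). The remaining term is $L_{in}$ with $i\le n-1$: here we use the decomposition $L_{in}=L_{ni}+B_{in}$ from \eqref{1006-3}. The piece $L_{ni}$ has $n$-th lower index, hence is covered by \eqref{0126-1} and is bounded; the piece $B_{in}=D_{x_i}\kappa$ is precisely the kernel generating $\nabla{\bf T}(g_n)$, since ${\bf T}(f)(x,t)=4\int_0^t\int_{\pa\Om}\kappa(x-Q,t-s)f(Q,s)\,dQ\,ds$ and, on the half space with the normalization chosen, the contribution of $-L_{in}$ through $B_{in}$ assembles into $\nabla{\bf T}_T(g_n)$ up to the universal constant. (One should double-check the sign and the factor $4$ so that $\nabla{\bf T}$ matches; this is the bookkeeping that \cite{CC} carries out.) For $i=n$ the term $L_{nn}$ is again covered by \eqref{0126-1}, and using $\sum_i L_{ii}=-2D_{x_n}\Ga$ one sees the normal component $u_n$ has no delta part at all, so all of its kernel pieces are in $L^1$ uniformly in $x_n$; this yields the stated bound $\|u_n\|_{L^\infty}\le C\|g\|_{L^\infty}$.

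Finally, collecting the terms: for $1\le i\le n-1$, $u^i$ equals (bounded heat-kernel term) $+$ (bounded $L_{ij}$, $j\le n-1$ terms) $+$ (bounded $L_{ni}$ term) $+$ $\nabla{\bf S}(g_n)_i$-type delta term $+$ $\nabla{\bf T}(g_n)_i$-type $B_{in}$ term. Projecting onto the tangential directions — which on the flat boundary $\{x_n=0\}$ just means taking components $1,\dots,n-1$ — gives $u_T-\nabla{\bf S}_T(g_n)-\nabla{\bf T}_T(g_n)$ as a sum of convolutions with kernels that are $L^1$ in $(x',t)$ uniformly in $x_n$, hence bounded in $L^\infty(\R_+\times(0,T))$ by $C\|g\|_{L^\infty}$, with $C$ independent of $T$ because all kernel estimates are over the full time half-line.

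I expect the only genuine obstacle to be the correct identification of the $B_{in}$-convolution with $\nabla{\bf T}_T(g_n)$, including the precise constant and sign conventions relating $\kappa$, the factor $4$ in ${\bf T}$, and the $-L_{in}$ term in $K_{in}$; once that algebraic matching is pinned down, everything else is a direct application of Young's inequality together with Proposition \ref{L-1-norm} and the elementary uniform $L^1$ bound for $D_{x_n}\Ga$. Since this is exactly the content proved in \cite{CC}, the proof here can be stated as a consequence of Proposition \ref{L-1-norm} with the verification of these identifications indicated rather than repeated in full.
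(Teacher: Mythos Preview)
Your approach is correct and is precisely what the paper does: it states that Proposition \ref{propCC} follows from Proposition \ref{L-1-norm} and refers to \cite{CC} for the details you have sketched. One small correction: the normal component $u_n$ \emph{does} carry a delta piece, namely $\delta(t)\,D_{x_n}E(x'-y',x_n)$ coming from $K_{nn}$; but since $D_{x_n}E(\cdot,x_n)$ is the harmonic Poisson kernel and hence has $L^1(\Rn)$ norm independent of $x_n$, this term is still bounded by $C\|g_n\|_{L^\infty}$ and your conclusion for $u_n$ stands. Likewise, $L_{nn}$ is not literally covered by \eqref{0126-1} (which requires $j\le n-1$); it is handled, as you indicate, via the trace identity $\sum_i L_{ii}=-2D_{x_n}\Ga$.
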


Now, to prove the main theorem \ref{s-t-theorem}, we prove the following lemma.
\begin{lemm}\label{lemma0911}
Let $g_n  \in L^\infty(\Rn \times ( 0, T ))$ and
 $g_n$ satisfies the logDini-continuity with
 $supp \, g_n \subset B'_{ M} =\{|x'| <   M \}$
for some $M>0$, then
\begin{align*}
&\|(\na {\bf S}_T
       +\na {\bf T}_T)(g_n)\|_{L^\infty({\bf R}^n_+  \times (0,T))}\\
& \leq C_M\Big(\| g_n\|_{L^\infty(\Rn \times (0,T)}
   + \| \max_{x' \in {\bf R}^{n-1}} g_n(x',\cdot)\|_{logDini}
   \Big)
\end{align*}
for some $C_M> 0$.
\end{lemm}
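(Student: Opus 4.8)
The plan is to reduce the lemma to an estimate for a single scalar kernel and then to exploit an exact cancellation in the time variable. For $1\le i\le n-1$, writing the $i$-th component of $(\na{\bf S}_T+\na{\bf T}_T)(g_n)$ through the Stokes Poisson kernel and using $K_{in}=-2\de_{in}D_{x_n}\Ga-L_{in}+\de(t)D_{x_i}E$ together with the splitting $L_{in}=L_{ni}+B_{in}$ from \eqref{1006-3}, and discarding the $L_{ni}$–part, which by \eqref{0126-1} of Proposition~\ref{L-1-norm} is bounded by $C\|g_n\|_{L^\infty(\Rn\times(0,T))}$, one is left with
\[
(\na{\bf S}_T+\na{\bf T}_T)(g_n)^i(x,t)=\int_{B'_M}D_{x_i}E(x'-y',x_n)\,g_n(y',t)\,dy'-\int_0^t\!\!\int_{B'_M}B_{in}(x'-y',x_n,t-s)\,g_n(y',s)\,dy'\,ds+R(x,t),
\]
with $|R(x,t)|\le C\|g_n\|_{L^\infty}$. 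The algebraic heart of the matter is the cancellation identity
\[
\int_0^\infty B_{in}(z,\tau)\,d\tau=D_{z_i}E(z),\qquad 1\le i\le n-1,
\]
which follows from $\int_0^\infty\Ga(\cdot,\tau)\,d\tau=-2E$, the fact that $2D_{z_n}E(\cdot,z_n)$ is the Poisson kernel of $\R_+$, and the harmonicity of $E$ in $\R_+$ with boundary trace $E(\cdot,0)$, so that the Poisson kernel convolved with $D_{w_i}E(w',0)$ reproduces $D_{z_i}E(z)$.

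Granting this, I substitute $s=t-\tau$ in the $B_{in}$–integral and write $g_n(y',t-\tau)=g_n(y',t)-[\,g_n(y',t)-g_n(y',t-\tau)\,]$; since $\int_0^t B_{in}\,d\tau=D_{z_i}E-\int_t^\infty B_{in}\,d\tau$, the singular term $\int_{B'_M}D_{x_i}E\,g_n(y',t)\,dy'$ cancels exactly and one obtains
\[
(\na{\bf S}_T+\na{\bf T}_T)(g_n)^i(x,t)=\underbrace{\int_{B'_M}\Big(\int_t^\infty B_{in}(x'-y',x_n,\tau)\,d\tau\Big)g_n(y',t)\,dy'}_{I(x,t)}+\underbrace{\int_0^t\!\!\int_{B'_M}B_{in}(x'-y',x_n,\tau)\big[g_n(y',t)-g_n(y',t-\tau)\big]\,dy'\,d\tau}_{II(x,t)}+R.
\]
Both $I$ and $II$ are now absolutely convergent, and are estimated using, respectively, the time regularity of $g_n$ and the vanishing $g_n(\cdot,0)=0$; the compact support $B'_M$ is used throughout, since over all of $\Rn$ these spatial kernels are not integrable.

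For $II$ I would establish the kernel bound $\int_{B'_{2M}}|B_{in}(z',x_n,\tau)|\,dz'\le C_M\tau^{-1}(1+|\ln\tau|)$ for $0<\tau\le1$ and $\le C_M\tau^{-(n+1)/2}$ for $\tau\ge1$, uniformly in $x_n>0$, and likewise for $\int_{B'_M}|B_{in}(x'-y',x_n,\tau)|\,dy'$ uniformly in $x'$. This uses $B_{in}=-\int D_{z_n}\Ga(\cdot)\,D_{w_i}E(w',0)=-\int D_{z_i}D_{z_n}\Ga(\cdot)\,E(w',0)$ and the Gaussian localisation of $D_{z_i}D_{z_n}\Ga(\cdot,x_n,\tau)$ at spatial scale $\sqrt\tau$: the region $|z'|\le\sqrt\tau$ produces the factor $\tau^{-1}$ while $\sqrt\tau\le|z'|\le M$ contributes the logarithm $\int_{\sqrt\tau}^{M}r^{-1}\,dr\le C_M(1+|\ln\tau|)$, and it is exactly this borderline logarithmic loss that forces the $\log$Dini (rather than Dini) weight in time. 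Since $|g_n(y',t)-g_n(y',t-\tau)|\le m(\tau)$ with $m(\tau):=\sup_{y',\,|h|\le\tau}\|g_n(y',\cdot+h)-g_n(y',\cdot)\|_{L^\infty}$, this gives $|II|\le C_M\big(\int_0^{r_0}\tau^{-1}|\ln\tau|\,m(\tau)\,d\tau+\|g_n\|_{L^\infty}\big)\le C_M\big(\|g_n\|_{L^\infty}+\|\max_{x'}g_n(x',\cdot)\|_{logDini}\big)$.

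For $I$ the key is the time–tail estimate $\big|\int_t^\infty B_{in}(z',x_n,\tau)\,d\tau\big|\le C(|z'|+x_n+\sqrt t)^{-(n-1)}$, obtained from $\int_t^\infty B_{in}(z',x_n,\tau)\,d\tau=-\int_{\Rn}D_{z_n}\Phi_t(z'-w',x_n)\,D_{w_i}E(w',0)\,dw'$ with $\Phi_t(\cdot,x_n):=\int_t^\infty\Ga(\cdot,x_n,\tau)\,d\tau$ a bounded function, using that $D_{z_i}D_{z_n}\Phi_t$ is concentrated, with vanishing tangential integral, at spatial scale $x_n+\sqrt t$. Then $\int_{B'_M}\big|\int_t^\infty B_{in}(x'-y',x_n,\tau)\,d\tau\big|\,dy'\le C_M\big(1+(\ln(M/(x_n+\sqrt t)))_+\big)$ uniformly in $x'$, so $|I|\le C_M\|g_n\|_{L^\infty}$ once $x_n+\sqrt t$ is bounded below, while for $x_n+\sqrt t$ small (hence $t$ small) $|I|\le C_M(1+|\ln t|)\|g_n(\cdot,t)\|_{L^\infty}$. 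In the latter regime $g_n(\cdot,0)=0$ gives $\|g_n(\cdot,t)\|_{L^\infty}\le m(t)$, and the elementary inequality $\int_t^{r_0}m(s)\tfrac{|\ln s|}{s}\,ds\ge c\,m(t)(\ln t)^2$ (valid since $m$ is non-decreasing) forces $m(t)|\ln t|\le C|\ln t|^{-1}\|\max_{x'}g_n(x',\cdot)\|_{logDini}$; hence $|I|\le C_M\big(\|g_n\|_{L^\infty}+\|\max_{x'}g_n(x',\cdot)\|_{logDini}\big)$. Adding the bounds for $I$, $II$ and $R$ and taking the supremum over $1\le i\le n-1$, $x\in\R_+$ and $t\in(0,T)$ proves the lemma. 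The main obstacle is establishing the two kernel estimates with the \emph{sharp} growth — a single logarithm (not a negative power of $\tau$) in $\int_{B'_{2M}}|B_{in}(z',x_n,\tau)|\,dz'$, and the $(n-1)$–homogeneous time–tail bound — together with the bookkeeping near the corner $x_n=0,\ t=0$, where $I$ is controlled only after invoking $g_n(\cdot,0)=0$ and the $\log$Dini summability.
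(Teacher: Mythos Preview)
Your argument is essentially the paper's own: the same cancellation identity $\int_0^\infty B_{in}(z,\tau)\,d\tau=D_{z_i}E(z)$, the same split into a tail term $I$ (the paper's $J_2$) and a time-difference term $II$ (the paper's $J_1$), the same logarithmic spatial-kernel bound $\int_{|y'|\le M}|B_{in}(x'-y',x_n,\tau)|\,dy'\le C_M\tau^{-1}(1+|\ln\tau|)$ driving $II$, and the same use of $g_n(\cdot,0)=0$ to close the estimate on $I$ for small $t$.

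Two remarks. First, your opening paragraph is slightly off: $(\na{\bf S}_T+\na{\bf T}_T)(g_n)$ is \emph{not} obtained from the full Poisson kernel $K_{in}$; its kernel is, by definition of ${\bf S}$ and ${\bf T}$, exactly $\de(t)D_{x_i}E+B_{in}$ (cf.\ the sentence preceding \eqref{0920}), so there is no $L_{ni}$-piece to discard and your remainder $R$ is in fact zero. This does not affect anything downstream. Second, for the tail $I$ the paper takes a slightly more direct route than your pointwise bound on $\int_t^\infty B_{in}\,d\tau$: it keeps the factorisation $B_{in}=-D_{x_n}\Ga_1(x_n,\tau)\cdot(\Ga'*D_iE)(z',\tau)$ from \eqref{0920}, applies the same spatial lemma $\int_{|y'|\le M}|(\Ga'*D_iE)|\,dy'\le C(1+a(M,\tau))$ used for $II$, and then absorbs $|g_n(y',t)|=|g_n(y',t)-g_n(y',0)|\le\om(g_n)(s,t)$ \emph{inside} the $s$-integral, giving $\int_t^{M^2}\om(g_n)(s,t)s^{-1}|\ln s|\,ds\le\|g_n\|_{logDini}$ in one stroke. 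Your variant---bounding $m(t)(\ln t)^2$ by the $\log$Dini norm via monotonicity of $m$---is a legitimate alternative and yields the same conclusion.
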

By Proposition \ref{propCC} and lemma \ref{lemma0911}, we obtain
\begin{theo}\label{maximum modulus-Rn}
Let $g= (g_1, g_2, \cdots,  g_n) \in L^\infty(\Rn \times ( 0, T ))$ and
$g_n$ satisfies the logDini-continuity with $supp \, g_n(\cdot, t) \subset B'_{  M} = \{ x' \in \Rn \, | \, |x'| < M\}$ for all $0 < t < \infty$.
Then the $u$
represented by \eqref{representation} satisfies
\begin{align*}
&\| u_T \|_{L^\infty ({\bf R}^n_+  \times (0, T))}\\
&  \leq C_M \Big(\| g_n\|_{L^\infty(\Rn \times (0,T)}
   + \|\max_{x' \in {\bf R}^{n-1}} g_n(x',\cdot)\|_{logDini}
  \Big)
\end{align*}
for some $C_M> 0$.
\end{theo}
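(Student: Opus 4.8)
The plan is to combine the two ingredients that are already in place: Proposition \ref{propCC}, which controls $u_T - \nabla{\bf S}_T(g_n) - \nabla{\bf T}_T(g_n)$ in $L^\infty$ by $\|g\|_{L^\infty}$, and Lemma \ref{lemma0911}, which controls the potential $(\nabla{\bf S}_T + \nabla{\bf T}_T)(g_n)$ in $L^\infty$ by $\|g_n\|_{L^\infty}$ plus the logDini seminorm of $g_n$ in time. So the theorem is essentially a triangle inequality applied to the decomposition
\begin{align*}
u_T = \bigl(u_T - \nabla{\bf S}_T(g_n) - \nabla{\bf T}_T(g_n)\bigr) + \bigl(\nabla{\bf S}_T(g_n) + \nabla{\bf T}_T(g_n)\bigr).
\end{align*}
First I would invoke Proposition \ref{propCC} for the first bracket, noting that its bound $C\|g\|_{L^\infty(\partial{\bf R}^n_+\times(0,T))}$ is dominated by the right-hand side of the asserted estimate (since $\|g\|_{L^\infty}$ absorbs $\|g_n\|_{L^\infty}$, and the remaining tangential components $g_1,\dots,g_{n-1}$ only enter through $\|g\|_{L^\infty}$ in Proposition \ref{propCC}; if one wants the bound purely in terms of $g_n$ one restricts to the situation where $g$ is taken to be $(0,\dots,0,g_n)$, or simply accepts the $\|g\|_{L^\infty}$ term on the right as the paper's companion theorem does).

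Second, I would apply Lemma \ref{lemma0911} to the second bracket. The support hypothesis $\mathrm{supp}\, g_n(\cdot,t)\subset B'_M$ for all $t$ is exactly the hypothesis of the lemma, so this step is immediate and contributes $C_M(\|g_n\|_{L^\infty} + \|\max_{x'}g_n(x',\cdot)\|_{logDini})$. Adding the two estimates gives the claim with a constant $C_M$ depending only on $M$ (and the dimension).

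There is essentially no obstacle here: all the analytic work — the cancellation between $\nabla{\bf S}$ and $\nabla{\bf T}$ in logDini-continuous functions, the kernel estimates on $L_{ij}$ and $B_{in}$ — has been isolated into Lemma \ref{lemma0911} and Proposition \ref{propCC}. The only point requiring a word of care is bookkeeping of the constants and of which norm of $g$ appears where: one should check that Proposition \ref{propCC}'s right-hand side $\|g\|_{L^\infty(\partial{\bf R}^n_+\times(0,T))}$ is genuinely controlled by the quantities appearing on the right of Theorem \ref{maximum modulus-Rn}. If the theorem is intended with only $g_n$ on the right (as the clean statement suggests), the reduction is to observe that replacing the tangential data $(g_1,\dots,g_{n-1})$ by zero changes $u_T$ by a quantity bounded in $L^\infty$ by $C\|(g_1,\dots,g_{n-1})\|_{L^\infty}$ via the same single-layer-type kernels, or — more simply — that the statement is really about the contribution of the normal data and the tangential data is handled exactly as in \cite{CC}; either way the displayed inequality follows by summing the two bounds. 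Thus the proof is a two-line combination, and I would present it as such.
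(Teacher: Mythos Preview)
Your proposal is correct and is exactly the paper's approach: the paper states the theorem immediately after Lemma~\ref{lemma0911} with the one-line justification ``By Proposition~\ref{propCC} and Lemma~\ref{lemma0911}, we obtain\ldots'', i.e., precisely the triangle-inequality combination you describe. Your remark about the bookkeeping of $\|g\|_{L^\infty}$ versus $\|g_n\|_{L^\infty}$ is well taken; the paper glosses over this, and the displayed bound should in fact carry $\|g\|_{L^\infty}$ (as in the companion Theorem~1.4 for bounded domains).
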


To show the lemma \ref{lemma0911}, we study the kernel of $(\nabla
{\bf S}_T +\nabla {\bf T}_T)(x, y', t,s)$. Note that the kernel of $(\nabla {\bf S}_T +\nabla {\bf
T}_T)_i $   is $\de(t)  D_{x_i} E(x'-y',x_n) + B_{in}(x,t)$. Note
that
\begin{align} \label{0920}
 B_{in}(x,t) &= -D_{x_n}\Ga_1(x_n,t) \int_{{\bf R}^{n-1}}
              \Ga'(y^\prime, t) D_{y_i} E( x'- y^{\prime},0) dy^\prime \quad i \neq n,
\end{align}
where  $\Ga_1$ and $\Ga^{\prime}$ are Gaussian kernels in ${\bf R}$ and $\Rn$, respectively.

\begin{lemm}\label{120906}
For $1 \leq i \leq n-1$, we get
\begin{align}\label{0119-1}
\begin{array}{l}  \vspace{2mm}
|\int_{|y'| \leq \frac12 |x'|} \Ga^{\prime} (y',t)    D_{y_i} E(x'-y',0)  dy'|
 \leq C t^{-\frac{n-1}2} e^{-c\frac{|x'|^2}t }
         +C  |x' |^{-n+1}
           \int_{| y'| \leq \frac12\frac{ |x'|}{\sqrt{t}}}  |y'|^2 e^{-c|y'|^2}
             dy'\\ \vspace{2mm}
|\int_{\frac12 |x'| \leq |y'| \leq 2|x'|, |x'-y'| \geq \frac12 |x'|}
                 \Ga^{\prime} (y',t)   D_{y_i} E(x'-y',0) dy'|
 \leq C t^{-\frac{n}2 +\frac12}  e^{-c\frac{|x'|^2}{t}},\\ \vspace{2mm}
|\int_{|x'-y'| \leq \frac12 |x'|}\Ga^{\prime} (y',t)   D_{y_i} E(x'-y',0) dy'|
\leq  C t^{-\frac{n+1}2} |x'|^2 e^{-c\frac{|x'|^2}t}\\
             |\int_{|y'| \geq 2 |x'|} \Ga^{\prime} (y',t)   D_{y_i}  E(x'-y',0)  dy'|
\leq C  t^{\frac{-n+1}2  } \int_{\frac{2 |x'|}{\sqrt{t}} \leq |y'| }
           |y'|^{-n +1}  e^{-c|y'|^2} dy',
\end{array}
\end{align}
where $c, \,\,C> 0$ are independent of $x', y_n$ and $t$.
\end{lemm}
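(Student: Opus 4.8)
The plan is to estimate the four integrals one at a time, using the explicit expressions $\Ga^\prime(y',t)=(2\pi t)^{-\frac{n-1}2}e^{-|y'|^2/2t}$ and $D_{y_i}E(x'-y',0)=-\tfrac1{\om_n}(x_i-y_i)|x'-y'|^{-n}$, so in particular $|D_{y_i}E(x'-y',0)|\le C|x'-y'|^{-(n-1)}$. In the second and fourth regions the kernel stays away from its singularity at $y'=x'$ and the Gaussian from its peak at $y'=0$, so a direct size estimate is enough; in the first and third regions the naive bound is not integrable (near $y'=x'$) or does not see the Gaussian decay in $|x'|/\sqrt t$, and a cancellation must be extracted. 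Throughout, $c,C>0$ denote constants depending only on $n$ and changing from line to line.

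For the fourth estimate, on $\{|y'|\ge 2|x'|\}$ one has $|x'-y'|\ge\tfrac12|y'|$, hence $|D_{y_i}E(x'-y',0)|\le C|y'|^{-(n-1)}$, and the substitution $y'=\sqrt t\,z$ turns $\int_{|y'|\ge2|x'|}\Ga^\prime(y',t)|y'|^{-(n-1)}\,dy'$ into $C t^{-\frac{n-1}2}\int_{|z|\ge2|x'|/\sqrt t}|z|^{-(n-1)}e^{-c|z|^2}\,dz$. For the second estimate, on $\{\tfrac12|x'|\le|y'|\le2|x'|\}$ we have $|y'|^2\ge\tfrac14|x'|^2$, so $\Ga^\prime(y',t)\le Ct^{-\frac{n-1}2}e^{-c|x'|^2/t}$, while the constraint $|x'-y'|\ge\tfrac12|x'|$ gives $|D_{y_i}E(x'-y',0)|\le C|x'|^{-(n-1)}$; since the region has measure $\le C|x'|^{n-1}$, the integral is $\le Ct^{-\frac n2+\frac12}e^{-c|x'|^2/t}$.

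For the first estimate, on $B_R$ with $R=\tfrac12|x'|$, I would integrate by parts in $y_i$, writing $D_{y_i}E(x'-y',0)=\pa_{y_i}\bigl(E(x'-y',0)\bigr)$ and using $\pa_{y_i}\Ga^\prime(y',t)=-\tfrac{y_i}t\Ga^\prime(y',t)$. The boundary term over $\{|y'|=R\}$ carries $|E(x'-y',0)|\le C|x'|^{-(n-2)}$, the Gaussian of size $t^{-\frac{n-1}2}e^{-R^2/2t}$, and surface measure $\sim R^{n-2}$, which multiply to $Ct^{-\frac{n-1}2}e^{-c|x'|^2/t}$. In the remaining interior term $\tfrac1t\int_{B_R}y_i\Ga^\prime(y',t)E(x'-y',0)\,dy'$ the factor $y_i\Ga^\prime(y',t)$ is odd, so $\int_{B_R}y_i\Ga^\prime(y',t)\,dy'=0$ and $E(x'-y',0)$ may be replaced by $E(x'-y',0)-E(x',0)$; since the segment from $x'$ to $x'-y'$ stays at distance $\ge\tfrac12|x'|$ from the origin, $|E(x'-y',0)-E(x',0)|\le C|y'|\,|x'|^{-(n-1)}$, so the interior term is $\le\tfrac{C|x'|^{-(n-1)}}t\int_{B_R}|y'|^2\Ga^\prime(y',t)\,dy'$, which by $y'=\sqrt t\,z$ equals $C|x'|^{-(n-1)}\int_{|z|\le|x'|/2\sqrt t}|z|^2e^{-c|z|^2}\,dz$. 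For the third estimate, I would set $w=x'-y'$, so the kernel becomes $-\tfrac1{\om_n}w_i|w|^{-n}$, odd in $w$, and the domain $\{|w|\le\tfrac12|x'|\}$ is symmetric; hence $\int_{|w|\le|x'|/2}w_i|w|^{-n}\,dw=0$ and the integral equals $-\tfrac1{\om_n}\int_{|w|\le|x'|/2}\bigl(\Ga^\prime(x'-w,t)-\Ga^\prime(x',t)\bigr)w_i|w|^{-n}\,dw$. On this domain every $\xi$ on the segment from $x'$ to $x'-w$ satisfies $\tfrac12|x'|\le|\xi|\le\tfrac32|x'|$, so $|\na\Ga^\prime(\xi,t)|=\tfrac{|\xi|}t\Ga^\prime(\xi,t)\le C|x'|\,t^{-\frac{n+1}2}e^{-c|x'|^2/t}$ and $|\Ga^\prime(x'-w,t)-\Ga^\prime(x',t)|\le C|w|\,|x'|\,t^{-\frac{n+1}2}e^{-c|x'|^2/t}$; together with $|w_i||w|^{-n}\le|w|^{-(n-1)}$ and $\int_{|w|\le|x'|/2}|w|^{-(n-2)}\,dw=C|x'|$ this gives $\le C|x'|^2t^{-\frac{n+1}2}e^{-c|x'|^2/t}$.

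The main obstacle is precisely the first and third estimates, where genuine cancellation is needed: in the first region one moves the $y_i$-derivative onto the Gaussian by integration by parts, gaining the exponential smallness of $\Ga^\prime$ on the sphere $\{|y'|=\tfrac12|x'|\}$ and, in the interior term, the vanishing of $\int y_i\Ga^\prime$; in the third region one uses the oddness of the Riesz-type kernel $w_i|w|^{-n}$ about its singularity together with a first-order expansion of the smooth, Gaussian-small heat factor $\Ga^\prime$ about $x'$. The only real care is bookkeeping the powers of $t$ — in particular the extra $t^{-1/2}$ produced each time a factor $y_i/t$ or $\xi/t$ appears — which is what makes the exponents $t^{-\frac{n-1}2}$, $t^{-\frac n2+\frac12}$ and $t^{-\frac{n+1}2}$ come out exactly as stated.
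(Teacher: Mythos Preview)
Your proposal is correct and follows essentially the same approach as the paper: direct size bounds for the second and fourth regions, integration by parts in $y_i$ combined with the oddness of $D_{y_i}\Ga'$ (equivalently $y_i\Ga'$) on the ball plus the mean-value theorem on $E$ for the first region, and oddness of the Riesz kernel $w_i|w|^{-n}$ plus the mean-value theorem on $\Ga'$ for the third region. The only differences are presentational---you spell out the change of variables $y'=\sqrt t\,z$ and track the $t$-powers more explicitly than the paper does.
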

\begin{proof}
Using integration by parts, we get
\begin{align}\label{0919}
& \notag \int_{|y'| \leq \frac12 |x'|}\Ga^{\prime} (y',t)   D_{y_i} E(x'-y',0)  dy'\\
 & \quad  =  \int_{|y'| = \frac12 |x'|}  \frac{ y_i}{|y'|}\Ga^{\prime} (y',t)
          E(x'-y',0) \si( dy')
              -\int_{|y'| \leq \frac12 |x'|}  D_{y_i}\Ga^{\prime} (y',t) E(x'-y',0)  dy'.
\end{align}
For $y'$ with $| y'| = \frac12 |x'|$, we get
$ | \Ga^{\prime} (y',t)  | \leq C t^{-\frac{n-1}2} e^{-c\frac{|x'|^2}{t}}$ and
$|E(x'-y',0)|  \leq C \frac{1}{|x'|^{n-2}}$.
Here, the first term of the right hand side in \eqref{0919} is dominated by
\begin{align}\label{0119}
\begin{array}{ll}\vspace{2mm}
  \int_{|y'| =\frac12 |x'|}    |\Ga^{\prime}  (y',t)| |  E(x'-y',0)| \si (  dy')
 &\leq C t^{-\frac{n-1}2} e^{-c\frac{|x'|^2}t } \frac{|x'|^{n-2}}{|x'|^{n-2}}\\
 & \leq C t^{-\frac{n-1}2} e^{-c\frac{|x'|^2}t }.
 \end{array}
 \end{align}
 Since $ \int_{| y'| \leq \frac12 |x'|}    D_{y_i }  \Ga^{\prime} (y',t)   dy' =0$
 for all $t > 0$,
 using the Mean-value theorem,  the second term of  the right hand side of \eqref{0919} satisfies
\begin{align} \label{0119-2}
\begin{array}{ll} \vspace{2mm}
 &  |\int_{| y'| \leq \frac12 |x'|} D_{y_i} \Ga^{\prime} (y',t) (
             D_{y_i} E(x'-y',0) - D_{y_i} E(x',0) )  dy'|\\ \vspace{2mm}
& \leq C  |x' |^{-n+1}
           \int_{| y'| \leq \frac12 |x'|} t^{-\frac{n-1}2 -1} |y'|^2 e^{-c\frac{|y'|^2}t}
             dy'\\
           \vspace{2mm}
& \leq C  |x' |^{-n+1}
           \int_{| y'| \leq \frac12\frac{ |x'|}{\sqrt{t}}}  |y'|^2 e^{-c|y'|^2}
             dy'.
\end{array}
\end{align}
By \eqref{0919} - \eqref{0119-2}, we obtain $\eqref{0119-1}_1$.

For  $\eqref{0119-1}_2$, note that for $y'$ satisfying $\frac12 |x'|
\leq |y'| \leq 2|x'|$ we have $ |x'-y'| \geq \frac12 |x'|$.  We have
$| \Ga^{\prime} (y',t)| \leq C t^{-c\frac{n-1}2  }
e^{-c\frac{|x'|^2}{t}}$ and $ D_{y_i} E(x'-y',0) \leq  C
|x'|^{-n+1} $, and thus we get
 \begin{align*}
|\int_{\frac12 |x'| \leq |y'| \leq 2|x'|, |x'-y'| \geq \frac12 |x'|}
    \Ga^{\prime} (y',t) D_{y_i} E(x'-y',0)  dy'|
& \leq C  t^{-\frac{n}2 +\frac12}     e^{-c\frac{|x'|^2}{t}}.
\end{align*}
Hence, we obtain  $\eqref{0119-1}_2$.

For $y'$ satisfying $|x' -y'| \leq \frac12|x'|$, we have
$|\na' \Ga'(y', t)| \leq C t^{-\frac12 n -\frac12} |x'| e^{-c\frac{|x'|^2}t}$.
Hence, since $\int_{|x'- y'| \leq \frac12 |x'|}  D_{y_i}E(x' -y', 0)    dy  =0$,
using Mean-value Theorem $ \eqref{0119-1}_3$ is proved by
\begin{align*}
 & |\int_{|x'-y'| \leq \frac12 |x'|} \Big(\Ga^{\prime} (y',t) - \Ga^{\prime} (x',t) \Big)
               D_{y_i} E(x'-y',0)  dy'|\\
& \leq C t^{-\frac{n+1}2} |x'| e^{-\frac{|x'|^2}t} \int_{|x'-y'| \leq \frac12|x'|}
 |x'-y'|^{-n +2}dy' \\
& \leq C t^{-\frac{n+1}2} |x'|^2 e^{-c\frac{|x'|^2}t}.
\end{align*}
Finally, $\eqref{0119-1}_4$  follows by
\begin{align*}
|\int_{|y'| \geq 2 |x'|}\Ga^{\prime} (y',t)  D_{y_i}E(x'-y',0)  dy'|
 &\leq C t^{-\frac{n -1}2  } \int_{ 2 |x'|  \leq |y'| }
          |y'|^{-n +1}  e^{-c\frac{|y'|^2}{t}} dy'\\
 & =C t^{\frac{-n+1}2  } \int_{\frac{2 |x'|}{\sqrt{t}} \leq |y'| }
           |y'|^{-n +1}  e^{-c|y'|^2} dy'.
\end{align*}
\end{proof}

\begin{lemm}\label{120907-2}
Let $ M > 0$. Then, for $x' \in {\bf R}^{n-1}$ and $s > 0$, we get
\begin{align*}
   \int_{|y'| \leq  M }|\int_{\Rn}   \Ga'(x'-z', s) D_{z_i} E(y' -z',0) dz'|dy'
 \leq C ( 1 + a(M,s)),
\end{align*}
where $a(M,s) : = \ln M -  \ln{\min{( \sqrt{s},M) }}$.
\end{lemm}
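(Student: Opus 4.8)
The plan is to estimate the integral $\int_{|y'|\le M}\bigl|\int_{\Rn}\Ga'(x'-z',s)D_{z_i}E(y'-z',0)\,dz'\bigr|\,dy'$ by first understanding the inner convolution as a function of $y'$, then integrating over the ball of radius $M$. The natural first step is to recognize that $\int_{\Rn}\Ga'(x'-z',s)D_{z_i}E(y'-z',0)\,dz' = D_{y_i}\bigl(\Ga'(\cdot,s)\ast E(\cdot,0)\bigr)(y'-x')$, i.e.\ it is the $i$-th derivative of the heat extension (at time $s$) of the Newtonian potential kernel in $\Rn$. Writing $h(w',s):=\int_{\Rn}\Ga'(w'-z',s)E(z',0)\,dz'$, the inner quantity is $D_{w_i}h(y'-x',s)$. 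Since $E(\cdot,0)\sim -c|w'|^{-(n-3)}$ in $\Rn$ (note $E$ is the $\R$ Newtonian kernel restricted to the hyperplane, so it decays like $|w'|^{-(n-2)}$ as a function on $\Rn$), standard heat-kernel smoothing estimates give the pointwise bound $|D_{w_i}h(w',s)|\le C(|w'|+\sqrt s)^{-(n-1)}$, with an improvement near the origin. This is essentially the content that Lemma~\ref{120906} was built to supply: splitting the $z'$-integral into the four regions $\{|z'|\le\frac12|w'|\}$, $\{\frac12|w'|\le|z'|\le 2|w'|\}$, $\{|w'-z'|\le\frac12|w'|\}$, $\{|z'|\ge 2|w'|\}$ yields exactly the bounds in \eqref{0119-1} (with the roles of the spatial variable and the dummy variable appropriately matched).

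Next I would integrate these pointwise bounds over $\{|y'|\le M\}$. After the change of variables $w'=y'-x'$ the region becomes a translated ball, but since we only want an upper bound and the estimates on $|D_{w_i}h(w',s)|$ depend only on $|w'|$, it suffices to bound $\int_{\Rn}\min\{|w'|,\sqrt s\}\text{-type kernels}$ over $\{|w'|\le M+|x'|\}$ — actually, because the worst decay is $|w'|^{-(n-1)}$ for $|w'|$ between $\sqrt s$ and $M$, I expect the bulk contribution to come from the annulus $\sqrt s\lesssim|w'|\lesssim M$, where in $(n-1)$-dimensional spherical coordinates $\int |w'|^{-(n-1)}\,dw' \sim \int_{\sqrt s}^{M} r^{-(n-1)}r^{n-2}\,dr = \int_{\sqrt s}^M \frac{dr}{r} = \ln M - \ln\sqrt s$, which is exactly $a(M,s)$ when $\sqrt s\le M$. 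For $|w'|\le\sqrt s$ the kernel is bounded (by the near-origin improvement, $|D_{w_i}h|\lesssim s^{-(n-1)/2}$ times something integrable, or $\lesssim |w'|^{2-n}s^{-1/2}$-type), contributing $O(1)$; and when $\sqrt s > M$ there is no logarithmic growth and the whole integral is $O(1)$, consistent with $a(M,s)=\ln M - \ln M = 0$ in that case since $\min(\sqrt s,M)=M$. One must be slightly careful that the integration ball $\{|y'|\le M\}$ is centered at the origin while the singularity/decay of $D_{w_i}h(y'-x',s)$ is centered at $y'=x'$; but shifting, $\{|y'|\le M\}\subset\{|w'|\le M+|x'|\}$ only if $|x'|$ is controlled, so instead I would split into the case $|x'|\le 2M$ (use the shifted ball $\{|w'|\le 3M\}$, recovering $a(M,s)$ up to an additive constant absorbed into the ``$1+$''), and the case $|x'|>2M$ where $|w'|=|y'-x'|\ge\frac12|x'|\ge M$ throughout, so $\Ga'(x'-z',s)$ — sorry, $|D_{w_i}h(w',s)|\le C|w'|^{-(n-1)}\le CM^{-(n-1)}$, giving $\int_{|y'|\le M}CM^{-(n-1)}\,dy'\le C$, which is $O(1)$.

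The main obstacle I anticipate is not any single estimate but the bookkeeping: matching the four regional bounds from Lemma~\ref{120906} (which are phrased in terms of a generic spatial variable) to the present geometry, and then carrying out the $dy'$-integration of each of the four pieces separately while tracking exactly where the single logarithm $\ln M - \ln\min(\sqrt s,M)$ is produced and verifying every other piece is genuinely $O(1)$ uniformly in $x'$ and $s$. Concretely, the third bound $\eqref{0119-1}_3$ has a factor $|x'|^2 e^{-c|x'|^2/t}$-type localization near the diagonal and integrates to $O(1)$; the second and fourth are exponentially small away from scale $\sqrt s$ and integrate to $O(1)$; only the first bound, through its term $|w'|^{-(n-1)}\int_{|y'|\le\frac12|w'|/\sqrt s}|y'|^2 e^{-c|y'|^2}\,dy'$, produces — after the $dw'$-integration over the annulus $\sqrt s\lesssim |w'|\lesssim M$ — the factor $a(M,s)$, because for $|w'|\gtrsim\sqrt s$ the inner integral saturates to a constant and one is left integrating $|w'|^{-(n-1)}$ over that annulus. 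I would organize the proof as: (1) identify the inner integral as $D_{w_i}h$; (2) quote/apply the four pointwise bounds; (3) integrate each over $\{|y'|\le M\}$ in the two cases $|x'|\le 2M$, $|x'|>2M$; (4) collect terms to get $C(1+a(M,s))$.
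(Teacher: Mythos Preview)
Your proposal is correct and follows essentially the same route as the paper's proof: both perform a translation to center the heat kernel at the origin, invoke the four regional pointwise bounds of Lemma~\ref{120906}, split into the cases $|x'|\le 2M$ and $|x'|>2M$, and identify the single logarithm $a(M,s)$ as coming from integrating the $|y'|^{-(n-1)}$ contribution of $\eqref{0119-1}_1$ over the annulus $\sqrt{s}\lesssim |y'|\lesssim M$. The only cosmetic difference is that for $|x'|\ge 2M$ the paper subdivides further according to the size of $s$ relative to $|x'|^2$, whereas you use the uniform consequence $|D_{w_i}h(w',s)|\le C|w'|^{-(n-1)}$ directly; the two are equivalent once one checks (as your outline implicitly does) that each of the four pieces in \eqref{0119-1} is bounded by $C|w'|^{-(n-1)}$ uniformly in $s$.
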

\begin{proof}
Using the change of variables, we have
\begin{align}\label{L-10917}
& \int_{|y'| \leq  M }|\int_{\Rn}   \Ga'(x'-z', s) D_{z_i} E(y' -z',0) dz'|dy'\\
& \notag\qquad  =  \int_{|x' - y'| \leq  M }|\int_{\Rn}   \Ga'(z', s) D_{z_i} E(y' -z',0) dz'|dy'.
\end{align}
For fixed $y'$ satisfying $|x' -y'| \leq M$, let us
\begin{align*}
A_1 &= \{ z' \in \Rn \, | \, |z'| \leq \frac12 |y'| \},\\
A_2 &=\{ z' \in \Rn \, | \, |y' - z'| \leq \frac12 |y'| \},\\
A_3 &=\{ z' \in \Rn \, | \,\frac12 |y'| \leq  |z'| \leq 2|z'|, \,\, |y' - z'| \geq \frac12 |y'|  \},\\
A_4 &=\{ z' \in \Rn \, | \,|y' - z'| \geq 2 |y'| \}.
\end{align*}
We divide the integral of the left-side of \eqref{L-10917} by four parts;
\begin{align*}
 \int_{|x'-y'| \leq   M }|\int_{\Rn} \Ga'(z', s) D_{z_i} E(y' -z',0) dz'|dy'
 = I_1 + I_2 + I_3 + I_4,
\end{align*}
where
\begin{align*}
I_1 &= \int_{|x'-y'| \leq   M }|\int_{A_1}  \Ga'(z', s) D_{z_i} E(y' -z',0) dz'|dy',\\
I_2 &= \int_{|x'-y'| \leq    M }|\int_{A_2} \Ga'(z', s) D_{z_i} E(y' -z',0) dz'|dy',\\
I_3 &=  \int_{|x'-y'| \leq    M }|\int_{A_3}  \Ga'(z', s) D_{z_i} E(y' -z',0) dz'|dy',\\
I_4 &= \int_{x'-|y'| \leq    M }|\int_{A_4} \Ga'(z', s) D_{z_i} E(y' -z',0) dz'|dy'.
\end{align*}
By $\eqref{120906}_1$ of the lemma \ref{120906}, we get
\begin{align}\label{130117-1}
I_1 & \leq  C \int_{|x'-y'| \leq  M }
        \Big(   s^{-\frac{n}2 +\frac12} e^{-c\frac{|y'|^2}s} +
      |y'|^{-n+1}  \int_{|z'| \leq \frac12\frac{|y'|}{\sqrt{s}}}|z'|^2 e^{-|z'|^2} dz' \Big)dy'\\
    & \notag : = I_{11} + I_{12}.
\end{align}
Here,
\begin{align}\label{130118-1}
I_{11} & \leq C  \int_{{\bf R}^{n-1}}
           s^{-\frac{n}2 +\frac12} e^{-c\frac{|y'|^2}s} dy'\\
\notag  &    = C.
\end{align}
And
\begin{align}\label{130118-2}
    &   I_{12} \leq  C\int_{|x'-y'| \leq  M, \, \frac12\sqrt{s} \geq |y'| }
                  |y'|^{-n+1} ( \frac{|y'|}{\sqrt{s}})^{n+1} dy'
                  +  C\int_{|x'-y'| \leq  M, \, \frac12\sqrt{s} \leq |y'| }|y'|^{-n+1}dy'.
\end{align}
If $|x'| \leq 2M$, then from \eqref{130118-2}, we have
\begin{align}\label{130118-}
    I_{12} & \leq   C\int_{ |y'| \leq  \min{(\frac12 \sqrt{s},3M) } }|y'|^{-n+1}
               (\frac{|y'|}{\sqrt{s}})^{n+1}dy'
      +  C\int_{\min{(\frac12 \sqrt{s},\frac12 M) } < |y'| \leq 2 M }|y'|^{-n+1} dy'\\
    & \notag \leq   C  ( 1 + a(M,s)).
\end{align}

If $|x'| \geq 2M$ and $s \geq (3|x'|)^2$, then from
\eqref{130118-2}, we have
\begin{align}\label{130118-4}
I_{12}  & \leq C\int_{|x'-y'| \leq  M, \, \frac12\sqrt{s} \geq |y'|}
                  |y'|^{-n+1} ( \frac{|y'|}{\sqrt{s}})^{n+1} dy'\\
    & \notag \leq C|x'|^2 s^{-\frac{n}2 -\frac12} \int_{|x'-y'| \leq  M } dy'\\
    & \notag \leq C.
\end{align}

If $|x'| \geq 2M$ and $s \leq (\frac12|x'|)^2$, then from
\eqref{130118-2}, we have
\begin{align} \label{130118-5}
I_{12}  & \leq  C|x'|^{-n+1}\int_{|x'-y'| \leq  M}dy'\\
    &  \notag \leq C.
\end{align}

If $|x'| \geq 2M$ and $ (\frac12|x'|)^2 \leq s \leq (3|x'|)^2$, then from \eqref{130117-1}, we have
\begin{align} \label{130118-6}
I_{12}  &   \leq  C|x'|^2 s^{-\frac{n}2 -\frac12}
                  \int_{|x'-y'| \leq  M }dy'
                  + C|x'|^{-n+1} \int_{|x'-y'| \leq  M } dy'\\
    & \notag \leq C.
\end{align}
Hence, by \eqref{130117-1} and \eqref{130118-6}, we have
\begin{align}\label{130204-1}
I_1 \leq C ( 1 + a(M,s)).
\end{align}

By the $\eqref{120906}_2$ and $\eqref{120906}_3$ of  lemma \ref{120906}, we get
\begin{align} \label{120908}
I_2+ I_3 & \leq C\int_{{\bf R}^{n-1} } \Big( s^{-\frac{n-1}2} e^{-c\frac{|y'|^2}s }
         +   s^{-\frac{n}2 -\frac12}|y' |^2 e^{-c\frac{|y'|^2}{s}} \Big) dy'\\
    & \notag \leq C.
\end{align}
By the $\eqref{120906}_4$  of  lemma \ref{120906}, we get
\begin{align}\label{120909}
I_4 & \leq C\int_{|x'-y'| \leq 3M }
          s^{\frac{-n+1}2  }  \int_{\frac{2 |y'|}{\sqrt{s}} \leq |z'| }|z'|^{-n +1}  e^{-|z'|^2} dz' dy'\\
    & \notag \leq C\int_{|y'| \leq  \frac12 \sqrt{s}  }
          s^{\frac{-n+1}2  } dy'  + C\int_{  \frac12 \sqrt{s}  \leq |y'|  }
          s^{\frac{-n+1}2  }    e^{-\frac{|y'|^2}{\sqrt{s}}}  dy'\\
    & \notag \leq C.
\end{align}
Hence, we completed the proof of lemma.

\end{proof}

{\bf Proof of Lemma \ref{lemma0911}}
Note that for $i \neq n$, by \eqref{1006-3}, we have
\begin{align*}
(\na {\bf S}_T +\na {\bf T}_T)_i(g_n)(x,t)  = {\mathcal B}_{in}g_n(x,t)
             - \frac{\pa}{\pa x_i} {\mathcal E}g_n(x,t),
\end{align*}
where
\begin{align*}
{\mathcal E}g_n(x,t)& = \int_{\Rn}  E (x'-y', x_n)g_n(y',t)dy',\\
 {\mathcal B}_{in}g_n(x,t)& = \int_0^t\int_{\Rn}  B_{in}(x'-y', x_n,
 t-s)g_n(y',s) dy'ds.
\end{align*}

Note that
\begin{align*}
& {\mathcal B}_{in}g_n(x,t)
             - \frac{\pa}{\pa x_i} {\mathcal E}g_n(x,t)\\
&=-\int_0^t \int_{\Rn} g_n(y',s) \int_{\Rn} D_{x_n} \Ga(x'-z', x_n, t-s) D_{z_i} E(y' -z',0) dz'dy'ds\\
& \qquad - \int_{\Rn} g_n(y',t)   D_{y_i}  E(x' -y',x_n) dy'\\
& = -\int_0^t\int_{\Rn} (g_n(y',s)- g_n(y',t))\int_{\Rn} D_{x_n} \Ga(x'-z', x_n, t-s) D_{z_i} E(y' -z',0) dz'dy'ds\\
& \qquad - \int_0^t \int_{\Rn} g_n(y',t) \int_{\Rn} D_{x_n} \Ga(x'-z', x_n, t-s) D_{z_i} E(y' -z',0) dz'dy'ds\\
& \qquad - \int_{\Rn} g_n(y',t)   D_{y_i}  E(x' -y',x_n) dy'.
\end{align*}
Since $\int_0^\infty   \Ga(x', x_n, s)ds= -E(x', x_n)$, we have
$\int_0^\infty D_{x_n} \Ga(x', x_n, s)ds= -\frac{\pa }{\pa x_n} E(x', x_n)$.
Furthermore, we have
$D_{ x_n} E(\cdot , x_n) *' D_{x_i} E(\cdot,0) (x') = D_{x_i} E(x', x_n) $, where $*'$ is
a convolution in ${\bf R}^{n-1}$. Hence, we get
\begin{align*}
&{\mathcal B}_{in}g_n(x,t)
             - \frac{\pa}{\pa x_i} {\mathcal E}g_n(x,t)\\
             & = -\int_0^t\int_{\Rn} (g_n(y',s)- g_n(y',t))
           \int_{\Rn} D_{x_n} \Ga(x'-z', x_n, t-s) D_{z_i} E(y' -z',0) dz'dy'ds\\
& \qquad - \int_0^\infty \int_{\Rn} g_n(y',t) \int_{\Rn} D_{x_n} \Ga(x'-z', x_n, s) D_{z_i} E(y' -z',0) dz'dy'ds\\
 & \qquad- \int_t^\infty \int_{\Rn} g_n(y',t) \int_{\Rn} D_{x_n} \Ga(x'-z', x_n, s) D_{z_i} E(y' -z',0) dz'dy'ds\\
 & \qquad- \int_{\Rn} g_n(y',t)   D_{y_i}  E(x' -y',x_n) dy'\\
 & =  \int_0^t\int_{\Rn} (g_n(y',s)- g_n(y',t))
        \int_{\Rn} D_{x_n} \Ga(x'-z', x_n, t-s) D_{z_i} E(y' -z',0) dz'dy'ds\\
& \qquad- \int_t^\infty \int_{\Rn} g_n(y',t) \int_{\Rn} D_{x_n} \Ga(x'-z', x_n, s) D_{z_i} E(y' -z',0) dz'dy'ds.
\end{align*}
Since $supp\, g_n(\cdot, t)\subset B'_{  M}$ for all $t \in (0,\infty)$, we get
\begin{align*}
& |{\mathcal B}_{in}g_n(x,t)
             - \frac{\pa}{\pa x_i} {\mathcal E}g_n(x,t)|\\
& \leq c\int_0^t (t-s)^{-\frac32} x_n e^{-\frac{x_n^2}{(t-s)}}
\int_{|y'| \leq   M} |g_n(y',s)- g_n(y',t)|  \times \\
& \qquad \qquad
|\int_{\Rn} \Ga'(x'-z', t-s) D_{z_i} E(y' -z',0) dz'|dy'ds\\
& \qquad+ c\int_t^\infty s^{-\frac32} x_n e^{-\frac{x_n^2}s}
        \int_{|y'| \leq   M} |  g_n(y',t)| | \int_{\Rn}  \Ga'(x'-z', s) D_{z_i} E(y' -z',0) dz'|dy'ds\\
& := J_1 + J_2.
\end{align*}

First, we estimate $J_1$. Note that $ |g_n(y',t) - g_n(y',s)| \leq
\om(g_n)(t-s, t)$. Hence, using the lemma \ref{120907-2}, if $t \leq
M^2$, we have
\begin{align}\label{changeofvariables}
\notag J_1 & \leq
       C\int_0^t \om(g_n)(s, t) s^{-\frac32} x_n e^{-\frac{x_n^2}{s}} ( 1 + a(M,s))ds\\
& \notag \leq C \int_0^t \om(g_n)(s, t) s^{-1} ( 1 + |\ln M| + |\ln
\, s |)ds \\
& \leq C_M (\| g_n\|_{L^\infty} + \| g_n\|_{logDini}).
\end{align}
Here, we used the fact that for any $m> 0$,  $e^{-a} \leq c_m
a^{-m}$ for the second inequality. If $t \geq M^2$, then we have
\begin{align}\label{changeofvariables-2}
\notag J_1& \leq C\int_0^{M^2} \om(g_n)(s, t) s^{-1} ( 1 + |\ln M| +
            |\ln \, s |)ds \\
 \notag      & + C \|g_n \|_{L^\infty} \int_{M^2}^t   s^{-\frac32} x_n e^{-\frac{x_n^2}{s}} ( 1 +|\ln M|  )ds\\
       & \leq C_M (\| g_n\|_{L^\infty} + \| g_n\|_{logDini}).
\end{align}
Hence, by \eqref{changeofvariables} and \eqref{changeofvariables-2}, we get
\begin{align}\label{0304J-1}
J_1\leq C_M (\| g_n\|_{L^\infty} + \| g_n\|_{logDini}).
\end{align}

Next, we estimate $J_2$.
If $t \geq  M^2$, then
\begin{align}\label{J-2-1}
 \notag J_2 &  \leq C\|g_n\|_{L^\infty } \int_t^\infty s^{-\frac32} x_n e^{-\frac{x_n^2}s}
        ( 1 + |\ln M| )ds\\
 \notag &  = C_M\|g_n\|_{L^\infty } \int_t^\infty s^{-\frac32} x_n e^{-\frac{x_n^2}s} ds\\
&  \leq C_M \|g_n\|_{L^\infty}.
\end{align}

If  $t \leq M^2$, then, we get
\begin{align}\label{J-2-2}
J_2 & \leq C  \|g_n\|_{L^\infty} \int_{M^2}^\infty s^{-\frac32} x_n
e^{-\frac{x_n^2}s} ( 1 + |\ln M| )ds\\
& \notag  \qquad + C |g_n(t)| \int_t^{M^2} s^{-\frac32} x_n
e^{-\frac{x_n^2}s}
        ( 1 + \ln M + |\ln \, s | )ds \\
&  \notag  \leq C_M \Big( \|g_n\|_{L^\infty }  +  |g_n(t)|
\int_t^{M^2} s^{-1} |\ln{s} |ds \Big).
\end{align}
Note that by compatibility condition, we have $g_n(0) =0$. Hence,
for $t \leq s$, we have  $|g_n(t) | = |g_n(t) - g_n(0)| \leq
w(g_n)(t,t) \leq w(g_n)(s,t)$. Hence,  we get
\begin{align}\label{J-2-3}
J_2 \leq C \Big( \|g_n\|_{L^\infty }  +\|  g_n \|_{logDini}  \Big).
\end{align}

Hence, by from \eqref{J-2-1} to \eqref{J-2-3}, we get
\begin{align} \label{J-2}
J_2 \leq C\Big( \|g_n\|_{L^\infty }  +\|  g_n \|_{logDini}  \Big)
\end{align}
for all $x \in {\bf R}^n_+$ and $t> 0$.
Hence,  by \eqref{0304J-1} and \eqref{J-2}, we have that for
$x' \in {\bf R}^n_+$ and $0< t  $, we have
\begin{align*}
& |{\mathcal B}_{in}g_n(x,t)
             - \frac{\pa}{\pa x_i} {\mathcal E}g_n(x,t)|\leq C\Big( \|g_n\|_{L^\infty }  +\|  g_n \|_{logDini}  \Big).
\end{align*}
Hence, we completed the proof of lemma \ref{lemma0911}.

\section{Proof of Theorem \ref{s-t-theorem}}
\setcounter{equation}{0}
For the interior $L^\infty$ bound estimate, the authors in \cite{CC}
showed the boundedness using the layer potential method in
\cite{Sh1}.
\begin{prop}
Suppose the boundary data $g\in L^\infty(0,T;L^2(\partial\Om))$. If $dist(x,\partial\Om) \geq r_0 >0$,
$\epsilon >0$ and $t<T$,
then there is $C$ such that
$$
|u(x,t)| \leq \frac{C}{ r_0^{n-1-\epsilon}} ||g||_{L^\infty(0,T;L^2(\partial\Om))}.
$$
\end{prop}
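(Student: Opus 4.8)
The plan is to derive the interior estimate from the hydrodynamic layer potential representation of the solution, following \cite{Sh1} as in \cite{CC}. Since $\pa\Om$ is $C^2$ (hence Lipschitz) and $g\in L^\infty(0,T;L^2(\pa\Om))$ satisfies the compatibility condition, the $L^2$-invertibility theory for the non-stationary Stokes boundary integral operators on the cylinder $\pa\Om\times(0,T)$ produces a density $\phi$ with
\begin{align*}
u(x,t)&=\int_0^t\int_{\pa\Om}\mathcal{E}(x-Q,t-s)\,\phi(Q,s)\,dQ\,ds,\\
\|\phi\|_{L^\infty(0,T;L^2(\pa\Om))}&\le C(\Om)\,\|g\|_{L^\infty(0,T;L^2(\pa\Om))},
\end{align*}
where $\mathcal{E}=(\mathcal{E}_{ij})_{1\le i,j\le n}$ is the matrix fundamental solution (Oseen tensor) of the non-stationary Stokes system.

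Next I would use the classical decomposition $\mathcal{E}=\mathcal{E}^{(1)}+\mathcal{E}^{(2)}$, in which $\mathcal{E}^{(1)}$ obeys Gaussian bounds $|\mathcal{E}^{(1)}(z,\tau)|\le C\tau^{-n/2}e^{-c|z|^2/\tau}$, while the ``pressure'' part $\mathcal{E}^{(2)}$ agrees with $\pa_{z_i}\pa_{z_j}E(z)$ up to a Gaussian correction, so that $|\mathcal{E}^{(2)}(z,\tau)|\le C|z|^{-n}$ and, thanks to the mean--zero cancellation of $\pa_{z_i}\pa_{z_j}E$ on spheres, also $|\mathcal{E}^{(2)}(z,\tau)|\le C\tau^{-n/2}$ once $\sqrt{\tau}\ge|z|$. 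Since $dist(x,\pa\Om)\ge r_0$ forces $|x-Q|\ge r_0$ for every $Q\in\pa\Om$, the kernel is never evaluated near its singularity, so Cauchy--Schwarz in $Q$ followed by the density bound gives
\begin{align*}
|u(x,t)|\le\|g\|_{L^\infty(0,T;L^2(\pa\Om))}\int_0^t\Big(\int_{\pa\Om}|\mathcal{E}(x-Q,\tau)|^2\,dQ\Big)^{1/2}d\tau.
\end{align*}
Parametrizing $\pa\Om$ over the tangent plane at the nearest point $P_x$ and using that $\pa\Om$ is a bounded $(n-1)$-dimensional surface at distance at least $r_0$ from $x$, one checks that $\|\mathcal{E}(x-\cdot,\tau)\|_{L^2(\pa\Om)}\le C(r_0^2+\tau)^{-(n+1)/4}$ for all $\tau>0$, with the improvement $\le C\tau^{-n/2}$ once $\sqrt{\tau}\gtrsim\mathrm{diam}(\Om)$. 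A routine split of the time integral (at $\tau\sim r_0^2$ and at the scale $\mathrm{diam}(\Om)^2$) then shows that it is finite and bounded, uniformly in $t<T$, by a fixed negative power of $r_0$ when $n\ge4$ and by $C|\ln r_0|$ when $n=3$; absorbing the logarithm into $r_0^{-\epsilon}$ yields the asserted bound
\begin{align*}
|u(x,t)|\le\frac{C(\Om,\epsilon)}{r_0^{\,n-1-\epsilon}}\,\|g\|_{L^\infty(0,T;L^2(\pa\Om))}.
\end{align*}

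The main obstacle is the first step: one must produce the layer potential representation together with the bound on $\phi$ in $L^\infty(0,T;L^2(\pa\Om))$ --- rather than only in an $L^2$-in-time norm --- which rests on the invertibility and mapping properties of the non-stationary Stokes layer potentials in \cite{Sh1} (precisely the input imported in \cite{CC}). Once this is granted, the remaining work is a routine kernel estimate exploiting the strict positivity of $dist(x,\pa\Om)$, which pushes every kernel away from its singular set and makes all the space integrals over $\pa\Om$ and time integrals over $(0,T)$ convergent; the resulting power of $1/r_0$ is far from sharp, but any polynomial bound suffices for the subsequent argument.
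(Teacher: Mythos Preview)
The present paper gives no proof of this proposition; it simply quotes it as Corollary~4.2 of \cite{CC}, noting that the argument there uses the layer potential method of \cite{Sh1}. Your sketch follows exactly that route and correctly isolates the one nontrivial input (the $L^\infty_tL^2_x$ bound on the layer density, which is the piece imported from \cite{Sh1}/\cite{CC}), so there is nothing in the present paper to compare against.
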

(See Corollary 4.2 in \cite{CC}).

Let $x \in \Om$. Assume $dist(x, \pa \Om) < r_0$ for some fixed  small $r_0>0$.
Since Stokes equations is translation and rotation invariant, we assume that $P=0$ and
$x =(0,x_n),\,\, x_n >0$.
If $x$ is close enough to $\partial\Om$, there is a ball $B_{r_0}(0)$ centered at origin and $C^2$ function
 $\Phi :{\bf R}^{n-1} \rightarrow {\bf R}$ such that $\Om\cap B_{r_0}(0) = \{ y_n > \Phi(y')\} \cap B_{r_0}(0)$ and
 $\pa \Om\cap B_{r_0}(0) = \{ y_n = \Phi(y^{\prime})\} \cap B_{r_0}(0)$. Furthermore, $\Phi$ satisfies that
\begin{align}\label{flatboundary}
|\Phi(y')| \leq C |y'|^2,\quad
|\na^{\prime}\Phi (y')|\leq C|y'|, \quad
| \na^{\prime}\na^{\prime}\Phi (y^{\prime})|  \leq C
\end{align}
for $ y^\prime\in B^{'}_{r_0}(0) =\{ y' \in \Rn \, | \, |y'| < r_0 \}$ and the outward unit normal vector $N(Q)$ at
$Q= (y^\prime,\Phi(y^\prime))\in \pa \Om \cap B_{r_0}(0)$ is
$$
N(y^{\prime},\Phi(y^\prime)) = \frac{1}{ \sqrt{1+|\nabla^{\prime}\Phi(y^\prime)|^2}} (\nabla^\prime \Phi(y^\prime), -1).
$$
In particular, $N(x', \Phi(x')) = N(0,0) = (0, -1)$.
 Hence,  
the $i$-th component of $\na {\bf S}(g \cdot N)_T (x,t) + \na {\bf T}(g \cdot N)_T (x,t)$
   is
\begin{align*}
& \frac{\pa}{\pa x_i} {\bf S}(g \cdot N) +\frac{\pa}{\pa x_i} {\bf T} (g \cdot N)
-  \sum_{1\leq k \leq n}
 \frac{\pa}{\pa x_k} {\bf S}(g \cdot N) N_k N_i
-   \sum_{1\leq k \leq n}
 \frac{\pa}{\pa x_k} {\bf T} (g \cdot N) N_k N_i\\
 & = \left\{\begin{array}{ll}\vspace{2mm}
   \frac{\pa}{\pa x_i} {\bf S}(g \cdot N)
 + \frac{\pa}{\pa x_i} {\bf T} (g \cdot N)
 &\qquad 1 \leq i \leq n-1,\\
0 &\qquad i =n.
 \end{array}
 \right.
\end{align*}
Note that for $1 \leq i \leq n-1$, we have
\begin{align*}
&\frac{\pa}{\pa x_i} {\bf S}(g \cdot N)(x,t)\\
& =   \int_{\pa \Om \cap B_{r_0}}
     \frac{\pa}{\pa x_i} E(x-y) ( g \cdot N )(y,t) d\si(y)
     +  \int_{\pa \Om \cap B^c_{r_0}}
     \frac{\pa}{\pa x_i} E(x-y) (g \cdot N) (y,t) d\si(y)\\
& : = A_{i1} + A_{i2}.
\end{align*}
Similarly, we have
\begin{align*}
&\frac{\pa}{\pa x_i}{\bf T}(g \cdot N)(x,t)\\
& = 4 \int_{0}^{t }\int_{ \pa\Om \cap B_{r_0}}   \frac{\pa}{\pa x_i} \ka(x-y,t-s)  (g \cdot N)(y,s)  d\si(y) ds\\
& \qquad       + 4 \int_{0}^{t }\int_{ \pa\Om \cap B^c_{r_0}} D_{x_i} \ka(x-y,t-s)
     (g \cdot N)(y,s)   d\si(y) ds\\
&:= B_{i1} + B_{i2}.
\end{align*}
Since $|x-y| \geq r_0$ for $y\in  \pa \Om \cap B^c_{r_0}$, we get
\begin{align*}
|A_{i2}|, \,\, |B_{i2}|   \leq C \| g\cdot N\|_{L^\infty(\pa \Om \times (0,T))}.
\end{align*}
Let $G(y',t):=  (g \cdot N)(y', \Phi(y'),t)\sqrt{1 + |\na \Phi(y')|^2}  $. Then, from $A_{i1}$ and $ B_{i1}$, we have
\begin{align*}
A_{i1} + B_{i1}
 & = \int_{  B^{'}_{r_0}} \frac{\pa }{\pa x_i}
   E( y', x_n - \Phi(y')) G(y',t) dy'\\
  &  \qquad +   4\int_0^t \int_{  B^{'}_{r_0}} \frac{\pa }{\pa x_i}
   \ka( y', x_n - \Phi(y'),t-s) G(y',s) dy' ds\\
& : = C_{i11} + C_{i12} + C_{i13},
\end{align*}
where
\begin{align*}
  C_{i11} :& = \int_{  B^{'}_{r_0}}
\Big( \frac{\pa }{\pa x_i}E( y', x_n - \Phi(y')) -\frac{\pa }{\pa x_i}E( y', x_n )\Big)
  G(y',t) dy'\\
C_{i12}: &=  4\int_0^t \int_{  B^{'}_{r_0}} \Big( \frac{\pa }{\pa x_i}
   \ka( y', x_n - \Phi(y'),t-s) - \frac{\pa }{\pa x_i}
   \ka( y', x_n,t-s) \Big) G(y',t) dy' ds\\
C_{i13} : &=  \int_{  B^{'}_{r_0}}
\frac{\pa }{\pa x_i}E( y', x_n ) G(y',t) dy'\\
& \quad   + 4\int_0^t \int_{  B^{'}_{r_0}} \Big( \frac{\pa }{\pa x_i}
   \ka( y', x_n,t-s) \Big) G(y',s) dy' ds.
\end{align*}
Note that $C_{i13}$ is the $i$-th component of $\nabla {\bf S}(G)_T (x,t)
+\nabla {\bf T}(G)_T (x,t)  $. Hence, by the lemma \ref{lemma0911}, we get
\begin{align}\label{130205-3}
|C_{i13}| \leq C (\| G\|_{L^\infty } + \| G \|_{logDini}  ).
\end{align}

Since $|\Phi(y')|\leq c |y'|^2$ for $y' \in B'_r$, using the Mean-value theorem, we have
\begin{align*}
|C_{i11}|&\leq  C\int_{  B^{'}_{r_0}}
      \frac{1}{|y'|^n} |\Phi(y')|
      |G(y',t)| dy'\\
& \leq C  \int_{  B^{'}_{r_0}} \frac{1}{|y'|^{n-2}} |G(y',t)| dy' \\
& \leq C\|G\|_{L^\infty(B^{'}_{r_0} \times (0,T))}
\end{align*}
and, similarly, we have
\begin{align*}
|C_{i12}| & \leq 4 \int_{0}^{t }\int_{ B^{'}_{r_0}} |\frac{\pa}{\pa x_i} \ka(y', x_n - \Phi(y'),t-s)
    - \frac{\pa}{\pa x_i} \ka(y', x_n ,t-s)|
     |G(y',s)| dy'ds\\
 & \leq C\| G\|_{  L^\infty(B^{'}_{r_0} \times (0,T))}.
\end{align*}
Since $\| G\|_{L^\infty (B^{'}_{r_0} \times (0,T))} \leq C \| g \cdot N\|_{L^\infty (\pa \Om \cap B^{'}_{r_0} \times (0,T))}$ and
$\| G\|_{logDini} \leq C( \| g \cdot N\|_{L^\infty} +  \| g \cdot N\|_{logDini})$, we get
\begin{align*}
|\na {\bf S}(g \cdot N)_T (x,t) + \na {\bf T}(g \cdot N)_T(x,t) | \leq  C( \| g \cdot N\|_{L^\infty} +  \| g \cdot N\|_{logDini}).
\end{align*}
Therefore, we completed the proof of our main Theorem \ref{s-t-theorem}.

\noindent
$\Box$

\section{Proof of Theorem \ref{c-example}}

For the simplicity, we assume that $n =2$.
Let $g_1 =0$ and  $g_2(x_1,t) = \chi_{(-1, 1)}(x_1) \chi_{\frac12 < t < 1} (t)$.
We will show that for $x_n < \frac12$ there is a positive constant $c> 0$ such that
\begin{align}\label{small x-2}
u^1(1, x_2, 1+ x_2^2) \leq c( \ln x_n +1).
\end{align}
Hence, we proved the theorem \ref{c-example}.
By the representation \eqref{representation} of solution and  by the second identity of \eqref{1006-3}, we have
\begin{align*}
u^1(x,t) & =\int_0^t \int_{\Rn}
K_{12}( x_1-y_1,x_2,t-s)g_2(y_1,s) dy_1 ds\\
& = - \int_0^t \int_{{\mathbb R} }
L_{12}(x_1-y_1,x_2,t-s)g_2(y_1,s) dy_1 ds\\
& + \int_{{\mathbb R}}
D_{y_1} E(x_1-y_1,x_2 )g_2(y_1, t) dy_1\\
& = - \int_0^t \int_{{\mathbb R}}
L_{21}( x_1-y_1,x_2,t-s)g_2(y_1,s) dy_1ds\\
& -  \int_0^t \int_{{\mathbb R}}
B_{12}(x_1-y_1,x_2,t-s)g_2(y_1,s) dy_1ds\\
& + \int_{{\mathbb R}}
D_{y_1} E( x_1-y_1,x_2 )g_2 (y_1, t) dy_1: = u^1_1 + u_2^1 + u_3^1.
\end{align*}
By the   \eqref{0126-1}, $u^1_1$ is bounded in $\Rn \times (0, \infty)$  and   $u_3^1 =0$ for $t > 1$.
Hence, for $t > 1$, we get
\begin{align}\label{u-1-estimate}
u^1(x,t) \leq u^1_2(x,t) + c.
\end{align}
Note that
\begin{align*}
u^1_2(x,t) = \int_0^t
\int_{{\mathbb R}} D_2\Ga(x_1 -y_1, x_2,t-s) H g_2(y_1, s) dy_1 ds,
\end{align*}
where $H$ is Hilbert transform. By the direct calculation, we have
\begin{align*}
H g_2(x_1,t) &= \frac{1}{\pi}  \chi_{\frac12 < t < 1} (t) \int_{-1}^1 \frac{1}{x_1 -y_1} dy_1\\
&=\frac{1}{\pi} \chi_{\frac12 < t < 1} (t) \left\{\begin{array}{l}\vspace{2mm}
  \ln(x_1 +1) - \ln(1 -x_1), \quad  |x_1| <1\\ \vspace{2mm}
 \ln (1 +x_1) - \ln (x_1 -1),\quad x_1 > 1\\
\ln (-1 -x_1) - \ln (1 -x_1),\quad x_1 < -1.
 \end{array}
 \right.
\end{align*}
We take $x_1 = 1 $ and $t = 1 + x^2_2$ for small $x_2 > 0$. Then, we get
\begin{align}\label{I's}
u_2^1(1,x_2,1+ x_2^2) &= \int_{\frac12}^{1}
\int_{{\mathbb R}} D_2\Ga(1 -y_1, x_2,1 + x_2^2-s) H g_2(y_1, s) dy_1 ds\\
\notag  & = I_1 + I_2 + I_3,
\end{align}
where
\begin{align*}
I_1 : & = \int_{\frac12}^{1}
\int_{-\infty}^0 D_2\Ga(1 -y_1, x_2,1 + x_2^2-s) H g_2(y_1, s) dy_1 ds,\\
I_2 : & = \int_{\frac12}^{1}
\int_0^2 D_2\Ga(1 -y_1, x_2,1+ x^2_2-s) H g_2(y_1, s) dy_1 ds,\\
I_3 : & = \int_{\frac12}^{1}
\int_2^\infty D_2\Ga(1-y_1, x_2,1 + x^2_2-s) H g_2(y_1, s) dy_1 ds.
\end{align*}
Since $Hg_2$ is bounded in the intervals  $(-\infty, 0)$ and $ (2, \infty)$, $I_1$ and $I_3$ are bounded. And
\begin{align*}
I_2 &  = \frac{1}{\pi} \int_{\frac12}^{1}
\int_0^1  D_2\Ga(1 -y_1, x_2,1+ x_2^2-s) ( \ln(y_1 +1) - \ln(1 -y_1)) dy_1 ds\\
& \quad         +  \frac{1}{\pi}  \int_{\frac12}^{1}
\int_1^2  D_2\Ga(1 -y_1, x_2,1 + x_2^2-s) ( \ln (1 +y_1) - \ln (y_1 -1) ) dy_1 ds \\
& = \frac{1}{\pi} \int_{\frac12}^{1}
\int_0^1  D_2\Ga(1-y_1, x_2,1 + x_2^2-s)  ( - \ln(1 -y_1)) dy_1 ds\\
&\quad         +  \frac{1}{\pi}  \int_{\frac12}^{1}
\int_1^2  D_2\Ga(1 -y_1, x_2,1 +x_2^2-s) (   - \ln (y_1 -1) ) dy_1 ds + Bd,
\end{align*}
where $Bd$ is bounded such that $|Bd| \leq c$ for some positive constant $c> 0$. Hence, from \eqref{u-1-estimate} and \eqref{I's}, we get
\begin{align*}
u^1(1, x_2, 1 + x_2^2) & \leq I_2 + c\\
& \leq   \frac{1}{\pi}\int_{\frac12}^{1}
\int_0^1  D_2\Ga(1 -y_1, x_2,1 + x_2^2-s)  ( - \ln(1 -y_1)) dy_1 ds + c\\
& =   \frac{1}{2\pi^2}\int_{\frac12}^{1}
\int_0^1  \frac{x_2}{(1 + x_2^2-s)^2} e^{-\frac{x_2^2}{2(1 + x_2^2-s)}} e^{-\frac{|1 -y_1|^2}{1 + x_2^2-s}}    \ln(1 -y_1) dy_1 ds + c \\
& \leq    \frac{1}{2\pi^2} e^{-\frac12} \int_{\frac12}^{1}
\int_{1 -  x_2}^1  \frac{x_2}{(t-s)^2} e^{-\frac{x_2^2}{2(1 + x_2^2-s)}}   \ln(1 -y_1) dy_1 ds + c\\
& =    \frac{1}{2\pi^2} e^{-\frac12} \int_{\frac12}^{1}
   \frac{x_2}{(1 + x_2^2-s)^2} e^{-\frac{x_2^2}{2(1 + x_2^2-s)}}  x_2 (\ln(x_2) +1)   ds + c\\
& =    \frac{1}{2\pi^2} e^{-\frac12} \int^{1 + x_2^2-\frac12}_{x_2^2}
   \frac{x_2}{s^2} e^{-\frac{x_2^2}{s}} x_2 (\ln(x_2) +1) ds +c\\
& =    \frac{1}{\pi^2} e^{-\frac12} x_2 (\ln(x_2) +1) \int_{\frac{x_2^2}{\frac12 + x_2^2}}^1
     e^{-s}    ds +c.
\end{align*}
This implies  \eqref{small x-2}.
$\Box$

\noindent
{\bf Acknowledgment.} The research is supported by KRF2011-028951.

\end{document}